\newcommand{\precequals}{precequals}  
\newcommand{\sgn}{\mathop{\mathrm{sgn}}}
\newcommand{\G}{\Pi}
\newcommand{\cc}{\sigma}
\newcommand{\C}{C}
\newcommand{\Sym}{\mathfrak{S}}
\newcommand{\Pn}{\mathcal{P}_n}
\newcommand{\Pthree}{\mathcal{P}_3}
\newtheorem{theorem}{Theorem}
\newtheorem{conjecture}[theorem]{Conjecture}
\newtheorem{corollary}[theorem]{Corollary}
\newtheorem{lemma}[theorem]{Lemma}
\newtheorem{proposition}[theorem]{Proposition}
\theoremstyle{plain}
\theoremstyle{definition}
\newtheorem{definition}[theorem]{Definition}
\newtheorem{example}[theorem]{Example}
\newcommand{\D}{\mathcal{D}}
\DeclareMathOperator{\perm}{perm}
\begin{document}

\pagestyle{plain}
\title{A Terrible Expansion of the Determinant}
\author{Erik Insko}
\address{Department of Mathematics\\
Florida Gulf Coast University \\ Fort Myers, FL 33965}
\email{einsko@fgcu.edu}
\author{Katie Johnson}
\address{Department of Mathematics\\
Florida Gulf Coast University \\ Fort Myers, FL 33965}
\email{kjohnson@fgcu.edu}
\author{Shaun Sullivan} 
\address{Department of Mathematics\\
Florida Gulf Coast University \\ Fort Myers, FL 33965}
\email{ssullivan@fgcu.edu}

\subjclass[2010]{ 05A18, 05A40,	05E45    }
\keywords{determinant, permutahedron, umbral calculus, set partition}

\date{\today}

\begin{abstract}
 From a transfer formula in multivariate finite operator calculus, comes an expansion for the determinant similar to Ryser's 
formula for the permanent.  Although this one contains many more terms 
than the usual determinant formula.  To prove it, we consider the poset of ordered partitions, properties of the permutahedron, and some good old fashioned combinatorial techniques.  

\end{abstract}

\maketitle

\section{Introduction} \label{sec:Intro}

One of the foundational concepts of linear algebra is the determinant.  At the most basic level, this matrix parameter is celebrated for its intricate ties to the set of 
eigenvalues and as a similarity invariant.  However, the determinant still surprises us as the solution to a varying array of problems.

In addition to solving systems of linear equations and performing a change of variables in calculus, the determinant can help us count!  Benjamin and Cameron \cite{BC} recently showed 
the determinant will calculate the number of nonintersecting $n$-paths in certain nonpermutable digraphs, where an $n$-path is a set of $n$ paths from $n$ distinct source vertices to $n$ 
distinct sink vertices. In fact, the permanent will count the number of all $n$-paths. 

The determinant of a matrix can be found recursively, as an alternating sum of minors.  Often the determinant of an $n\times n$ matrix $A$ is defined compactly using the Leibniz formula, 
precisely \[\det(A)= \sum_{\sigma\in S_n} \sgn(\sigma) \prod_{i=1}^n a_{i,\sigma_i}.\]  Similarly, the permanent can be defined as a sum over subsets of $[n]:=\{1,2, \ldots, n\}$ using Ryser's 
formula \cite{HJ} \[\perm(A)= \sum_{S\subset[n]} (-1)^{n-|S|} \prod_{i=1}^n \sum_{j\in S} a_{i,j}.\] 

In this paper, we prove a much messier expansion of the determinant by instead indexing our terms using the set of ordered partitions of $[n]$.  Aptly, we call this the 
\emph{terrible expansion of the determinant.}  This expansion is analogous to Ryser's formula for the permanent. 
Section \ref{sec:Hardstuff} explains the origins of this expansion as it relates to multivariate finite operator calculus, a 
branch of mathematics that has proven useful in enumerating ballot (generalized Dyck) paths containing certain patterns
\cite{NiederhausenSullivan,sullivan}.

Before stating the formula for our expansion of the determinant, we introduce it with two fundamental examples.  
When  $n=2$ we see that Equation \eqref{eqn:2by2} provides the following expansion for the determinant.  
\begin{eqnarray}
\left|\begin{array}{cc}
a_{11} & a_{12} \\   \label{eqn:2by2}
a_{21} & a_{22} \\
\end{array}\right|&=&a_{11}(a_{12}+a_{22})+a_{22}(a_{11}+a_{21}) -(a_{11}+a_{21})(a_{12}+a_{22})\\ \notag
\end{eqnarray}
Likewise when $n=3$ we get the following expansion:  
\begin{eqnarray}
\notag|A|&=&a_{11}(a_{12}+a_{22})(a_{13}+a_{23}+a_{33})+a_{11}(a_{13}+a_{33})(a_{12}+a_{22}+a_{32}) \\\notag
&&+a_{22}(a_{11}+a_{21})(a_{13}+a_{23}+a_{33})+a_{22}(a_{23}+a_{33})(a_{11}+a_{21}+a_{31}) \\\notag
&&+a_{33}(a_{11}+a_{31})(a_{12}+a_{22}+a_{32})+a_{33}(a_{22}+a_{32})(a_{11}+a_{21}+a_{31}) \\\notag
&&-a_{11}(a_{12}+a_{22}+a_{32})(a_{13}+a_{23}+a_{33})-a_{22}(a_{11}+a_{21}+a_{31})(a_{13}+a_{23}+a_{33}) \\\notag
&&-a_{33}(a_{11}+a_{21}+a_{31})(a_{12}+a_{22}+a_{32})-(a_{11}+a_{21})(a_{12}+a_{22})(a_{13}+a_{23}+a_{33}) \\\notag
&&-(a_{11}+a_{31})(a_{13}+a_{33})(a_{12}+a_{22}+a_{32})-(a_{22}+a_{32})(a_{23}+a_{33})(a_{11}+a_{21}+a_{31}) \\\notag
&&+(a_{11}+a_{21}+a_{31})(a_{12}+a_{22}+a_{32})(a_{13}+a_{23}+a_{33}).\\\notag
\end{eqnarray}

Our main theorem gives a general description of the terrible expansion of the determinant. 

\begin{theorem}  \label{thm:terrible}
Let $A=\left(a_{ij}\right)_{n\times n}$.  The following formula is an expansion for the determinant of $A$:
\begin{equation}\label{eqn:main}
\det(A)=\sum\limits_{B\vdash[n]}(-1)^{n-|B|}\prod\limits_{\beta_k \in B}\prod\limits_{j\in\beta_k }\sum\limits_{i\in\beta^{\prime}_k}a_{ij},
\end{equation} 
where the outer summation runs over all ordered partitions $B= (\beta_1,\beta_2, \ldots, \beta_r)$ of the set $[n]$  and
the inner summation runs over all integers $i$ in the union of first $k$ parts $\beta_k^{\prime}=\bigcup_{j=1}^k\beta_j$ of 
the partition $B$.
\end{theorem}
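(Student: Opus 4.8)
The plan is to fully expand the right-hand side of \eqref{eqn:main} into monomials in the entries $a_{ij}$ and then collect, for each function $f\colon[n]\to[n]$, the total signed coefficient of the monomial $\prod_{j=1}^n a_{f(j),j}$. Expanding each inner sum $\sum_{i\in\beta_k'}a_{ij}$ amounts to choosing, for every column $j$, a row $f(j)$ lying in $\beta_k'$, where $\beta_k$ is the block containing $j$. Writing $\mathrm{rk}_B(x)=k$ when $x\in\beta_k$, the condition $f(j)\in\beta_k'=\beta_1\cup\cdots\cup\beta_k$ becomes simply $\mathrm{rk}_B(f(j))\le\mathrm{rk}_B(j)$ for every $j$. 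Hence the right-hand side equals $\sum_f c(f)\prod_{j}a_{f(j),j}$, where $c(f)=\sum_{B}(-1)^{n-|B|}$ and the inner sum ranges over those ordered partitions $B$ for which $\mathrm{rk}_B\circ f\le\mathrm{rk}_B$ pointwise. Since $\det(A)=\sum_{\sigma\in S_n}\sgn(\sigma)\prod_j a_{\sigma(j),j}$, it suffices to prove that $c(f)=\sgn(f)$ when $f$ is a bijection and $c(f)=0$ otherwise.

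First I would reinterpret an ordered partition with $r$ blocks as a surjective ranking $\mathrm{rk}_B\colon[n]\to[r]$, so that $c(f)$ is a signed count of the surjective rankings that are non-increasing along $f$. Passing to the functional digraph of $f$ (the edges $j\to f(j)$), the constraint forces the rank to be constant on every directed cycle: traversing a cycle returns to the start, so all of its vertices share one rank. Collapsing each cycle to a point yields the condensation poset $P_f$, whose elements are the cycles of $f$ together with the non-cyclic vertices, ordered by reachability. The rankings counted by $c(f)$ are then exactly the surjective order-preserving maps $P_f\to[r]$, equivalently the strict chains of order ideals $\emptyset=I_0\subsetneq I_1\subsetneq\cdots\subsetneq I_r=P_f$ in the distributive lattice $J(P_f)$, the block $\beta_k$ being recovered as the preimage of $I_k\setminus I_{k-1}$.

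With this dictionary in hand I would evaluate the signed sum. Separating out the elements merged within cycles contributes a fixed power of $-1$, and Philip Hall's theorem identifies the remaining alternating sum over chains in $J(P_f)$ with a M\"obius number, giving $c(f)=(-1)^{n}\,\mu_{J(P_f)}(\hat 0,\hat 1)$. The crosscut/atom theorem for finite lattices then says this M\"obius number vanishes unless $\hat 1=P_f$ is a join of atoms, i.e.\ unless every element of $P_f$ is minimal, which happens precisely when $P_f$ is an antichain. In a functional digraph this occurs exactly when every vertex lies on a cycle, that is, when $f$ is a permutation $\sigma$; then $P_f$ is an antichain on the $m$ cycles of $\sigma$, the lattice $J(P_f)$ is Boolean, and $c(\sigma)=(-1)^{n}(-1)^{m}=(-1)^{n-m}=\sgn(\sigma)$. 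Summing the surviving monomials reproduces the Leibniz expansion, establishing \eqref{eqn:main}.

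The hard part will be the vanishing statement $c(f)=0$ for every non-bijection, together with the careful bookkeeping that matches compatible ordered partitions with chains of order ideals and tracks the sign $(-1)^{n-|B|}$ through the collapse of cycles. Rather than invoking the lattice machinery, one can instead exhibit a sign-reversing involution on the compatible ordered partitions of a fixed non-permutation $f$: using that some vertex fails to lie on a cycle, one locates a distinguished element that can always be split off from, or merged into, an adjacent block, toggling $|B|$ by one while preserving both compatibility and the underlying monomial. This involution-based argument is the version I expect to mesh best with the permutahedral and ordered-partition-poset viewpoint advertised in the abstract, since the faces of the permutahedron are indexed by exactly these ordered partitions.
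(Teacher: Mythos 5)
Your proposal is correct, and its first half coincides with the paper's: your extraction of $c(f)$ via the condition $\mathrm{rk}_B(f(j))\le \mathrm{rk}_B(j)$ is exactly Lemma \ref{lemma:obvious}, and your observation that compatible rankings are constant on cycles, so that cycles may be collapsed at the cost of a fixed sign, is exactly the flattening step of Lemma \ref{lemma:flattenorbits}. After that the routes genuinely diverge. The paper evaluates the alternating sum in two separate ways: for bijections it uses the Stirling identity $\sum_k(-1)^{r-k}k!\,S(r,k)=1$ (Lemma \ref{lemma:stirling}), and for non-bijections it argues geometrically, showing $S_f$ is a union of $(n-1)$-cubes below singleton partitions (Corollary \ref{cubes}), realizing these as faces of the polytope cut from the permutahedron $\Pi_n'$ by the half-spaces $x_{f(j)}\le x_j$, and computing $c_f=\chi(\Pi_f)-\chi(\Delta_f)=1-1=0$, with $\Delta_f$ contractible because its convex pieces all contain the barycenter. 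You replace both steps by a single lattice-theoretic evaluation: compatible partitions become strict chains of order ideals in $J(P_f)$, Hall's theorem turns the alternating chain count into $\mu_{J(P_f)}(\hat 0,\hat 1)$, and the standard M\"obius function of a distributive lattice ($(-1)^{|P|}$ if $P$ is an antichain, $0$ otherwise) handles permutations and non-permutations uniformly; I verified your sign bookkeeping $c(f)=(-1)^n\mu$ (valid since $|B|$ is unchanged by collapsing) and the Boolean case $(-1)^{n-m}=\sgn(\sigma)$, which subsumes the paper's Stirling identity. Your route is shorter, avoids all topology, and is citable to standard results; the paper's route buys the geometric picture of $S_f$ inside the permutahedron that motivates its question Q3 --- and the two are secretly the same computation, since Hall's theorem identifies $\mu$ with the reduced Euler characteristic of the order complex that the paper's face count of $\Gamma_f$ measures. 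One caveat: your closing sign-reversing involution is too vague as written, but it is dispensable; if you want it, take $v$ maximal in $P_f$ with $\overline{f}(v)\ne v$ (a leaf of the forest) and toggle whether $v$ forms a singleton block immediately following the remainder of its block, maximality of $v$ guaranteeing both moves preserve compatibility.
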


The following example serves to clarify the notation in Theorem \ref{thm:terrible}.
\begin{example} \label{example:variety}
If \[B=(\beta_1,\beta_2,\beta_3)=\left(\{2\}, \{1,3\}, \{4,5\}\right),\] 
then \[B' = (\beta_1^{\prime},\beta_2^{\prime},\beta_3^{\prime})=\left(\{2\}, \{1,2,3\}, \{1,2,3,4,5\}\right),\] and the 
corresponding expression in Equation \eqref{eqn:main} for the ordered partition $B$ is
\[
a_{22}(a_{11}+a_{21}+a_{31})(a_{13}+a_{23}+a_{33})(a_{14}+a_{24}+a_{34}+a_{44}+a_{54})(a_{15}+a_{25}+a_{35}+a_{45}+a_{55}).
\]
notice that the first index runs through $B'$, 
while the second index runs through the partition $B$. 
\end{example}

The rest of this paper proceeds as follows:
In Section \ref{sec:flat}, we analyze the functions $f:[n]\to [n]$ indexing the terms in the terrible expansion.  
 After setting notation and proving a few fundamental lemmas, we end that section with Corollary \ref{corollary:permutation},
 which proves that when $f:[n] \rightarrow [n]$ is a bijective function, or a permutation, then the 
coefficients $c_f =sgn(f)$. In other words,
they are precisely the nonzero coefficients appearing in the determinant. 
In Section \ref{sec:partitions}, we study the poset of ordered partitions and identify the importance of singleton partitions so 
that we 
 can formulate our problem in more geometric terms as Euler characteristics 
 of convex polytopes relating to the permutahedron. 
In Section \ref{sec:permutahedron}, we prove that $c_f = 0$ for all non-bijective functions $f:[n] \to [n]$ by analyzing Euler 
characteristics of subsets of the permutahedron.
 This proves that the terrible expansion does indeed give a formula for the determinant.  
In Section \ref{sec:Hardstuff}, we give an extremely brief introduction to multivariate finite operator calculus, and state a 
more general open conjecture that motivated this paper.

\section{Flattening Functions} \label{sec:flat}
Upon expanding the expression in Theorem \ref{thm:terrible}, many terms will cancel. 
In this section, we set up the groundwork to keep track of each of the terms and show how they cancel. 
Now consider any function $f:[n] \to [n]$ and define the monomial
 \[
a_f:=\prod\limits_{j=1}^na_{f(j),j}.
\]   
Expanding the terms in Equation \eqref{eqn:main} results in a sum of the form
\begin{equation}\label{eqn:mainf}
\sum\limits_{B\vdash[n]}(-1)^{n-|B|}\prod\limits_{\beta_k \in B}\prod\limits_{j\in\beta_k 
}\sum\limits_{i\in\beta^{\prime}_k}a_{ij} =  \sum_{f} c_f a_f,
\end{equation} 
where each term $c_fa_f$ corresponds to a function from the set $[n]$ to itself.
For instance, in Equation \eqref{eqn:2by2} the four functions $f_i: [2] \to [2]$ are \[ f_1(1)=1, \  f_1(2)=1; \ \ f_2(1) = 1, \ f_2(2)=2; \ \ f_3(1) = 2,  \ f_3(2)=2; \text{ and }  f_4(1) =2, \ f_4(2)=1. \] 
The terms corresponding to each function are labeled below
 \begin{align*} 
 a_{11}(a_{12}+a_{22})+ a_{22}(a_{11}+a_{21}) -&(a_{11}+a_{21})(a_{12}+a_{22})  \\
 =&  (a_{11}a_{12}-a_{11}a_{12}) + (a_{11}a_{22}+ a_{11}a_{22}-a_{11}a_{22}) + \\ &(a_{21}a_{22}-a_{21}a_{22})+(-a_{12}a_{21})    \\
=& 0+a_{11}a_{22} +0 - a_{12}a_{21} \\       
 =& c_{f_1}a_{f_1} + c_{f_2}a_{f_2}+c_{f_3}a_{f_3}+c_{f_4}a_{f_4},\end{align*}
 where first we expand the terms and then we simplify. Hence we see that $c_{f_1}=0$, $c_{f_2}= 1$, $c_{f_3}=0$, and $c_{f_4} = -1$. 
                                                                                 
Thus, the goal of this paper is to combinatorially identify the coefficients $c_f$
for each such function, and show that they agree with the coefficients of $a_f$ in the determinant.  To do so, we must identify the set $S_f=\{ B \vdash [n]:  a_f \text{ appears as a summand of the product indexed by } B\}$.  
The following definition and lemma describe criteria for when an ordered partition $B \vdash [n]$ appears in $S_f$.  

\begin{definition}
Let $B= ( \beta_1, \beta_2, \ldots, \beta_r ) $ be an ordered partition of $[n]$.  For each $i\in [n]$, define $\beta(i)=k$ if $i\in\beta_k$. We say $i$ \textbf{\precequals} $j$ in $B$, denoted $i \preceq j$,
if $i$ appears in an earlier part or the same part as $j$ in the ordered partition $B$, i.e.,  \[ \beta(i)\leq\beta(j) .\]

\end{definition}

\begin{lemma}\label{lemma:obvious}
Let $B \vdash [n]$ be an ordered partition of $[n]$. The term $a_f$ appears 
in the product $ \prod\limits_{\beta_k \in B}\prod\limits_{j\in\beta_k }\sum\limits_{i\in\beta^{\prime}_k}a_{ij} $ iff  
$f(j) \preceq j$ in $B$ for all $1 \leq j \leq n$. \end{lemma}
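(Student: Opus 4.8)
The plan is to reduce the lemma to a bookkeeping observation about how the product indexed by $B$ expands into monomials. First I would rewrite the double product $\prod_{\beta_k \in B}\prod_{j\in\beta_k}\bigl(\sum_{i\in\beta'_k}a_{ij}\bigr)$ as a single product over $j\in[n]$. Since the parts $\beta_1,\ldots,\beta_r$ partition $[n]$, each column index $j$ lies in exactly one part, namely $\beta_{\beta(j)}$, so the product equals $\prod_{j=1}^n\bigl(\sum_{i\in\beta'_{\beta(j)}}a_{ij}\bigr)$, where the factor attached to column $j$ ranges over $i$ in the union $\beta'_{\beta(j)}=\bigcup_{l=1}^{\beta(j)}\beta_l$ of the first $\beta(j)$ parts.

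Next I would expand this product of sums. Choosing one summand from each factor amounts to selecting, for every $j\in[n]$, an index $i_j\in\beta'_{\beta(j)}$; the resulting monomial is $\prod_{j=1}^n a_{i_j,j}$, which is exactly $a_g$ for the function $g:[n]\to[n]$ defined by $g(j)=i_j$. The key point, which I would verify carefully, is that the assignment from choice functions to monomials is injective onto its image: because the column index $j$ runs over each element of $[n]$ exactly once, one recovers each $i_j$ from $a_g$ as the first index of the unique variable whose column index is $j$. Hence distinct choices yield distinct monomials, no collision occurs inside the product indexed by $B$, and the set of monomials appearing is precisely $\{\,a_g : g(j)\in\beta'_{\beta(j)}\text{ for all }j\in[n]\,\}$.

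It then remains to translate the membership condition into the order relation $\preceq$. For the fixed function $f$, the monomial $a_f$ appears if and only if $f(j)\in\beta'_{\beta(j)}=\bigcup_{l=1}^{\beta(j)}\beta_l$ for every $j$; and $f(j)$ lies in this union exactly when the part containing $f(j)$ is among the first $\beta(j)$ parts, i.e.\ $\beta(f(j))\le\beta(j)$, which is by definition the statement $f(j)\preceq j$ in $B$. This gives the claimed equivalence. I do not expect any serious obstacle: the only step requiring genuine care is the injectivity of the choice-to-monomial correspondence, which makes \emph{appears as a summand} unambiguous (each valid $f$ contributes to this particular product with coefficient exactly $1$, a refinement that will be convenient later); the remainder is simply unwinding the definitions of $\beta'_k$ and $\preceq$.
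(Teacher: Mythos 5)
Your proposal is correct and follows essentially the same route as the paper, whose proof is a one-sentence unwinding of the definition of $\beta'_k$: the term $a_f$ appears precisely when $f(j)\in\beta'_{\beta(j)}$ for all $j$, i.e.\ $f(j)\preceq j$. Your added verification that distinct choice functions yield distinct monomials (so each $a_g$ occurs with coefficient exactly $0$ or $1$ in the product indexed by $B$) is a useful detail the paper leaves implicit, but it does not change the argument.
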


\begin{proof}
Let $B \vdash [n]$ be an ordered partition of $[n]$.  
Since $\beta'_k = \displaystyle \cup_{j=1}^k \beta_j$, we see that $a_f$ appears as a term in the product $\prod\limits_{\beta_k \in B}\prod\limits_{j\in\beta_k }\sum\limits_{i\in\beta^{\prime}_k}a_{ij}$
precisely when $i=f(j) \preceq j$ in the ordered partition $B$ for all $1 \leq j \leq n$.  
\end{proof}

Now that Lemma \ref{lemma:obvious} has established that an ordered partition $B$ appears in the set $S_f$ if and only if 
$f(j)$ precequals $j$ in $B$ for all integers $1 \leq j \leq n$, we are ready to analyze which functions $f:[n] \to [n]$ correspond to nonzero
terms in the terrible expansion.  
To do so, we start by introducing some notation regarding the structure of such functions.

\begin{definition}\label{def:cyclic}
Let $f: [n] \to [n]$ be any function. We say that the function $f$ is \emph{acyclic} if $f^k(i)=i$ implies $k=1$ for all $i\in 
[n]$.  Otherwise, we say $f$ contains a \emph{cycle }.
\end{definition}

Henceforth we will prefer to work with acyclic functions.  The next definition, describes how each function $f:[n]\rightarrow 
[n]$ which contains a cycle 
can be simplified or \emph{flattened}
to an acyclic function $\bar{f}$ on a different set of elements. 

\begin{definition}\label{def:flattened}
Let $f: [n] \to [n]$ be any function, $\C_f=\{\cc_1, \cc_2, \ldots, \cc_r\}$ represent the cycles 
of $f$, $N_f\subseteq[n]$ be the elements not in a cycle, and $D_f=\C_f\cup N_f$. 
Define the \emph{flattened} function $\overline{f}:D_f\to D_f$ as follows 

\[
\overline{f}(i)=\begin{cases}
f(i) & \text{if } f(i) \text{ is not in a cycle of } f \\
\cc_j & \mbox{if } f(i) \mbox{ belongs to the cycle } \cc_j \\
i & \mbox{if } i\in \C_f \\
\end{cases}
.\] 
\end{definition}

Intuitively, $\overline{f}$ acts just like $f$, but shrinks each cycle of $f$ to a fixed point, and thus it is an acyclic 
function. 
The following example
illustrates Definitions \ref{def:cyclic} and \ref{def:flattened}.

\begin{example}
Let $f:[6] \to [6]$ be defined by $f(1) = 1, f(2) = 3, f(3)=2$, $f(4) = 3$, $f(5) = 6$, and $f(6) = 5$.  Then $\C_f = \{\cc_1, 
\cc_2 \}$ where $\cc_1 = (23)$ and $c_2 = (56)$. 
The sets $N_f = \{ 1,4 \}$ and $D_f = N_f \cup \C_f = \{ \cc_1, \cc_2, 1,4 \}$.  
The function $\overline{f} : D_f \to D_f$ is defined by $\overline{f}(\cc_1) = \cc_1$, $\overline{f}(\cc_2) = \cc_2$, 
$\overline{f}(1) =1$, and $\overline{f}(4) = \cc_1$. 
\end{example}

The following lemma shows that we can reduce the problem of calculating the coefficients $c_f$ in the terrible expansion, to that 
of calculating
the coefficients $c_{\bar{f}}$ corresponding to acyclic functions.

\begin{lemma}\label{lemma:flattenorbits}
If $f: [n] \to [n]$ is any function, then the coefficients $c_f$ and $c_{\overline{f}}$ are related by the equation $c_f=(-1)^{n-|D_f|} c_{\overline{f}}$.  
\end{lemma}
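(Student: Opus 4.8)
The plan is to realize $c_f$ as a signed enumeration of the set $S_f$ and then to build a sign-compatible bijection that collapses the cycles of $f$. Recall that expanding Equation \eqref{eqn:mainf} gives $c_f=\sum_{B\in S_f}(-1)^{n-|B|}$, where $|B|$ is the number of parts of $B$, and that by Lemma \ref{lemma:obvious} we have $B\in S_f$ exactly when $f(j)\preceq j$ in $B$ for every $j\in[n]$. Reading the terrible expansion the same way for the flattened function $\overline{f}$ on the set $D_f$ gives $c_{\overline{f}}=\sum_{\overline{B}}(-1)^{|D_f|-|\overline{B}|}$, the sum ranging over ordered partitions $\overline{B}$ of $D_f$ with $\overline{f}(y)\preceq y$ in $\overline{B}$ for all $y\in D_f$; call this index set $S_{\overline{f}}$. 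It therefore suffices to produce a bijection $\phi\colon S_f\to S_{\overline{f}}$ with $|\phi(B)|=|B|$, since then $(-1)^{n-|B|}=(-1)^{n-|D_f|}(-1)^{|D_f|-|\phi(B)|}$, and summing over $S_f$ yields the claimed identity.

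First I would prove a structural constraint: if $B\in S_f$, then every cycle of $f$ lies inside a single part of $B$. Indeed, for a cycle $(i_1\, i_2\, \cdots\, i_m)$ with $f(i_\ell)=i_{\ell+1}$ (indices taken mod $m$), the condition $f(i_\ell)\preceq i_\ell$ reads $\beta(i_{\ell+1})\le\beta(i_\ell)$ for each $\ell$; chaining these inequalities around the cycle forces $\beta(i_1)=\beta(i_2)=\cdots=\beta(i_m)$. This is exactly what guarantees that the collapsing operation is well defined on all of $S_f$.

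Next I would define $\phi$ by collapsing: within each part of $B$, replace the full set of elements of each cycle $\sigma_k$ by the single symbol $\sigma_k$, leaving the elements of $N_f$ untouched. By the structural constraint this produces a genuine ordered partition $\overline{B}=\phi(B)$ of $D_f$ with the same number of parts, and $\phi$ is a bijection from $\{B\vdash[n] : \text{each cycle lies in one part}\}$ onto all ordered partitions of $D_f$ (the inverse re-expands each $\sigma_k$ back to its cycle elements inside its part). It then remains to check that $\phi$ carries $S_f$ onto $S_{\overline{f}}$. Writing $\overline{\beta}$ for the part-index function of $\overline{B}$, the index of a non-cycle element is unchanged, while $\overline{\beta}(\sigma_k)$ equals the common value $\beta(i)$ for $i\in\sigma_k$. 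A short case analysis on the three defining cases of $\overline{f}$ shows the two constraint systems coincide: the conditions indexed by an element $j$ inside a cycle are vacuous on both sides (on the $B$-side because $f(j)$ lies in the same cycle as $j$, so $\beta(f(j))=\beta(j)$; on the $\overline{B}$-side because $\overline{f}(\sigma_k)=\sigma_k$), while for $j\in N_f$ the inequality $\beta(f(j))\le\beta(j)$ translates verbatim into $\overline{\beta}(\overline{f}(j))\le\overline{\beta}(j)$, whether $f(j)$ is itself non-cyclic or is absorbed into a cycle $\sigma$. Hence $B\in S_f$ if and only if $\phi(B)\in S_{\overline{f}}$, and the sign bookkeeping above finishes the proof.

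The main obstacle is confirming that the cycle-collapsing map is genuinely a constraint-respecting bijection: this rests on the structural lemma forcing each cycle into one part (without which $\phi$ is ill defined) and on the verification that no spurious or missing constraints appear after collapsing, in particular the clean cancellation of the conditions attached to elements sitting inside cycles. Once $|\overline{B}|=|B|$ is established, the factor $(-1)^{n-|D_f|}$ is purely a matter of comparing $n-|B|$ with $|D_f|-|\overline{B}|$.
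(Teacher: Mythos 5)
Your proof is correct and takes essentially the same route as the paper: Lemma \ref{lemma:obvious} forces each cycle of $f$ into a single part of any $B\in S_f$, collapsing cycles then gives a part-count-preserving bijection between $S_f$ and the partitions of $D_f$ satisfying the rules of $\overline{f}$, and the factor $(-1)^{n-|D_f|}$ comes from comparing $n-|B|$ with $|D_f|-|\phi(B)|$. If anything, your version is more careful than the paper's, which asserts loosely that $S_f$ ``is equivalent to the set of ordered partitions of $D_f$''; your explicit bijection onto $S_{\overline{f}}$, with the check that the constraints on non-cycle elements translate verbatim, is what that claim really means.
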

\begin{proof}
	Let $S_f : =\{ B \vdash [n]:  a_f \text{ appears as a summand of the product indexed by } B\}$, and
	similarly let $S_{\overline{f}} : =\{ A \vdash D_f:  a_{\overline{f}} \text{ appears as a summand of the product indexed 
by } A\}$.
  
If $B$ is an ordered partition for which $a_f$ appears as a summand, then Lemma \ref{lemma:obvious} implies that for each cycle \[ 
\cc_j=(i,f(i), f^2(i), \ldots, f^k(i))\] of $f$,
the following precedence relation must hold \[ i \preceq f^k(i) \preceq f^{k-1}(i) \preceq \cdots \preceq f(i) \preceq i \] in the ordered partition $B$.
 Therefore $c_j$ must be contained in the same part $\beta$ of the ordered partition $B$. From here, it is clear to see that $S_f$ 
is equivalent to the set of ordered partitions of $D_f$, and so each term in 
Equation \eqref{eqn:mainf} has the form
\begin{align*}  c_fa_f & = \sum\limits_{B \in S_f}(-1)^{n-|B|}a_f \\ & = (-1)^{n-|D_f|} \sum\limits_{A\in D_f }(-1)^{|D_f|-|A|}a_f \\ & = (-1)^{n-|D_f|}c_{\overline{f}} a_f, \end{align*}
and the lemma follows.
\end{proof}

The number of ordered partitions of $[n]$ with $k$ parts is well known to be $k!S(n,k)$, where $S(n,k)$ are the Stirling numbers of the second kind. We obtain an important corollary to Lemma \ref{lemma:flattenorbits} from the following well-known result about Stirling numbers of the second kind.

\begin{lemma}\label{lemma:stirling}
The following identity holds for ordered partitions:
\[
\sum\limits_{k=0}^n(-1)^{n-k}k!S(n,k)=1.
\]
\end{lemma}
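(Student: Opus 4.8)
The plan is to deduce the identity from the standard polynomial expansion of $x^n$ in the basis of falling factorials, evaluated at a single convenient point. Recall that the Stirling numbers of the second kind are characterized by the polynomial identity $x^n=\sum_{k=0}^n S(n,k)\,x^{\underline{k}}$, where $x^{\underline{k}}=x(x-1)\cdots(x-k+1)$ denotes the falling factorial. This holds as an identity of polynomials in $x$, and hence at every value of $x$, which is what lets us extract the alternating-factorial weights by a clever substitution.

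First I would specialize this identity to $x=-1$. The point of this choice is that the falling factorial collapses to a signed factorial: $(-1)^{\underline{k}}=(-1)(-2)\cdots(-k)=(-1)^k k!$. Substituting gives $(-1)^n=\sum_{k=0}^n (-1)^k k!\,S(n,k)$. Second, I would multiply both sides by $(-1)^n$ and simplify the sign using $(-1)^n(-1)^k=(-1)^{n+k}=(-1)^{n-k}$, which produces exactly $\sum_{k=0}^n (-1)^{n-k} k!\,S(n,k)=1$, as claimed. The boundary term $k=0$ causes no trouble: since $x^{\underline{0}}=1$ and $S(n,0)=0$ for $n\ge 1$ (with the convention $S(0,0)=1$), the $k=0$ summand contributes $0$ when $n\ge 1$ and $1$ when $n=0$, matching the statement in both cases.

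The only real content here is recalling and justifying the generating identity for $S(n,k)$; once it is in hand the computation is immediate, so I do not expect any serious obstacle, only the bookkeeping of signs. If a more self-contained or overtly combinatorial argument were preferred, I would instead use exponential generating functions: starting from $\sum_{n\ge 0} k!\,S(n,k)\,x^n/n!=(e^x-1)^k$, I would sum the geometric series $\sum_{k\ge 0}(1-e^x)^k=e^{-x}$, which gives $\sum_k (-1)^k k!\,S(n,k)=(-1)^n$ and hence the same conclusion after multiplying by $(-1)^n$. In that alternative route the one point needing care is the formal justification of interchanging the two summations and summing the geometric series; this is legitimate as an identity of formal power series because $1-e^x$ has zero constant term, so each coefficient of $x^n$ is a finite sum.
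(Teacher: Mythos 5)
Your proof is correct and is essentially identical to the paper's: both specialize the falling-factorial identity $x^n=\sum_{k=0}^n S(n,k)(x)_k$ at $x=-1$ and multiply through by $(-1)^n$. Your sign bookkeeping and boundary-case remarks (and the optional EGF route) are fine but add nothing beyond the paper's one-line argument.
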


\begin{proof}
The result follows immediately upon setting $x=-1$ in the following identity on Stirling numbers of the second kind\cite[p.~35]{Sta12}:
\[
\sum\limits_{k=0}^nS(n,k)(x)_k=x^n.   \qedhere 
\]
\end{proof}

\begin{corollary} \label{corollary:permutation}
 If $f:[n]\to [n]$ is bijective, i.e., $f=\pi$ for some $\pi \in \Sym_n$,  then $c_f=\sgn(\pi)$. 
\end{corollary}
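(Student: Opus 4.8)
The plan is to read off the result by combining the two preceding lemmas with the standard cycle formula for the sign of a permutation, so that essentially no new computation is needed. First I would note that since $\pi$ is a bijection, every element of $[n]$ lies in some cycle of $f=\pi$; hence $N_f=\emptyset$ and $D_f=\C_f$ is exactly the set of cycles of $\pi$. Writing $r$ for the number of cycles of $\pi$, this gives $|D_f|=r$. The crucial observation is then that the flattened function collapses $\pi$ all the way down to the identity: by Definition \ref{def:flattened}, every $i\in\C_f$ satisfies $\overline{f}(i)=i$, and since $D_f=\C_f$ there are no other elements to worry about, so $\overline{f}$ is the identity map on the $r$-element set $D_f$.

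Next I would compute $c_{\overline{f}}$ directly. By Lemma \ref{lemma:obvious}, an ordered partition $A\vdash D_f$ contributes to $S_{\overline{f}}$ precisely when $\overline{f}(j)=j\preceq j$ for every $j\in D_f$, a condition that holds trivially for the identity. Thus $S_{\overline{f}}$ consists of \emph{every} ordered partition of $D_f$, and grouping by the number of parts $k$ (of which there are $k!S(r,k)$) gives
\[
c_{\overline{f}}=\sum_{A\vdash D_f}(-1)^{|D_f|-|A|}=\sum_{k=0}^{r}(-1)^{r-k}k!S(r,k)=1,
\]
where the final equality is Lemma \ref{lemma:stirling}. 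Applying Lemma \ref{lemma:flattenorbits} then yields $c_f=(-1)^{n-|D_f|}c_{\overline{f}}=(-1)^{n-r}$.

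Finally I would invoke the well-known identity $\sgn(\pi)=(-1)^{n-r}$ relating the sign of a permutation in $\Sym_n$ to its number of cycles $r$, which gives $c_f=(-1)^{n-r}=\sgn(\pi)$, as claimed. I do not expect a genuine obstacle here: once one sees that flattening a permutation reduces it to the identity on its set of cycles, the argument is pure bookkeeping. The only point meriting a moment of care is matching the exponent $n-r$ coming out of the flattening lemma with the cycle formula for the sign, but this is standard, since a cycle of length $\ell$ is a product of $\ell-1$ transpositions and summing $(\ell-1)$ over all cycles gives $n-r$.
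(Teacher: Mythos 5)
Your proposal is correct and takes essentially the same approach as the paper: it reduces via Lemma \ref{lemma:flattenorbits} to the flattened function on the $r$-element set $D_f = \C_f$, evaluates the alternating sum over all ordered partitions of $D_f$ using the Stirling identity of Lemma \ref{lemma:stirling}, and finishes with $\sgn(\pi) = (-1)^{n-r}$. If anything, you are slightly more explicit than the paper, which leaves both the observation that $\overline{f}$ is the identity (so $S_{\overline{f}}$ is \emph{all} ordered partitions of $D_f$) and the verification of $\sgn(\pi)=(-1)^{n-r}$ to the reader.
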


\begin{proof} Since $f$ is bijective, it consists only of cycles. Thus $D_f = 
C_f = \{\sigma_1, \ldots, \sigma_r \}$ and $|D_f|=r$.  By Lemma \ref{lemma:flattenorbits} and Lemma \ref{lemma:stirling} we see

\begin{align*} c_f & = (-1)^{n-|D_f|}\sum\limits_{A\in \D }(-1)^{|D_f|-|A|} \\& = (-1)^{n-r}\sum\limits_{B\vdash[r]}(-1)^{r-|B|} \\& = (-1)^{n-r}\sum\limits_{k=0}^n(-1)^{r-k}k!S(r,k) \\& = (-1)^{n-r}. \end{align*} We leave it to the reader to verify that $\sgn(\pi)=(-1)^{n-r}$.
\end{proof}

With Corollary \ref{corollary:permutation}, we have that the terrible expansion contains every term of the determinant with the 
correct coefficient.
It remains to show that whenever $f$ is not bijective, $c_f=0$. By Lemma~\ref{lemma:flattenorbits}, it suffices to show this for 
acyclic functions.

\section{The Poset of Ordered Partitions} \label{sec:partitions}

We next consider the poset of ordered partitions in order to show that the set $S_f$ has a nice structure when $f$ is acyclic.  This will allow us to eventually switch to a more geometric viewpoint.
 
Let $\Pn$ denote the poset of ordered partitions of the set $[n]$.
In Figure \ref{poset3}, we see the poset $\Pthree$ of ordered partitions on 3 elements. At the top of the poset $\Pn$, we have 
the $n!$ ordered partitions consisting of singletons. 
These partitions correspond bijectively with the elements of $\Sym_n$. Because of their importance later, 
we will refer to them as \itshape singleton partitions. \upshape 

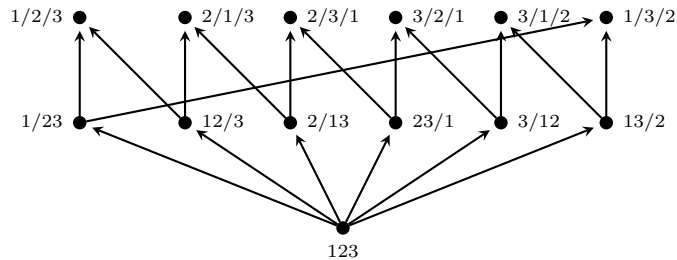
\begin{figure}[h] 
\begin{center}\begin{tikzpicture} 
[vertex/.style={circle,draw=black,fill=black,thick,inner sep=0pt,minimum size=1.5mm}, 
pre/.style={<-,shorten >=1pt,>=stealth,semithick}, 
post/.style={->,shorten >=2.5pt,>=stealth,thick},
noarrow/.style={-,thick}, scale=.7]

\node at (0,0) (123) [vertex] [label={below, font=\tiny}: 123] {};
\node at (-5,2) (1_23) [vertex] [label={left, font=\tiny}: {1/23}] {};
\node at (-3,2) (12_3) [vertex] [label={right, font=\tiny}: {12/3}] {};
\node at (-1,2) (2_13) [vertex] [label={right, font=\tiny}: {2/13}] {};
\node at (1,2) (23_1) [vertex] [label={right, font=\tiny}: {23/1}] {};
\node at (3,2) (3_12) [vertex] [label={right, font=\tiny}: {3/12}] {};
\node at (5,2) (13_2) [vertex] [label={right, font=\tiny}: {13/2}] {};

\node at (-5,4) (1_2_3) [vertex] [label={left, font=\tiny}: {1/2/3}] {};
\node at (-3,4) (2_1_3) [vertex] [label={right, font=\tiny}: {2/1/3}] {};
\node at (-1,4) (2_3_1) [vertex] [label={right, font=\tiny}: {2/3/1}] {};
\node at (1,4) (3_2_1) [vertex] [label={right, font=\tiny}: {3/2/1}] {};
\node at (3,4) (3_1_2) [vertex] [label={right, font=\tiny}: {3/1/2}] {};
\node at (5,4) (1_3_2) [vertex] [label={right, font=\tiny}: {1/3/2}] {};

\draw [post] (123) -- (1_23);
\draw [post] (123) -- (12_3);
\draw [post] (123) -- (2_13);
\draw [post] (123) -- (23_1);
\draw [post] (123) -- (3_12);
\draw [post] (123) -- (13_2);

\draw [post] (1_23) -- (1_2_3);
\draw [post] (12_3) -- (2_1_3);
\draw [post] (2_13) -- (2_3_1);
\draw [post] (23_1) -- (3_2_1);
\draw [post] (3_12) -- (3_1_2);
\draw [post] (13_2) -- (1_3_2);

\draw [post] (1_23) -- (1_3_2);
\draw [post] (12_3) -- (1_2_3);
\draw [post] (2_13) -- (2_1_3);
\draw [post] (23_1) -- (2_3_1);
\draw [post] (3_12) -- (3_2_1);
\draw [post] (13_2) -- (3_1_2);

\end{tikzpicture}
\caption{Poset of Ordered Partitions on 3 elements}
\label{poset3}
\end{center}
\end{figure}

Directly below a given ordered partition $B$ in $\Pn$ are ordered partitions formed by taking the union of two consecutive parts in $B$. 
For example, directly below the singleton partition 3/1/2/4 are the ordered partitions 13/2/4, 3/12/4, and 3/1/24. All ordered 
partitions under a singleton partition creates an $(n-1)$-cube. 
An example is given in Figure \ref{cube}.

An acyclic function can be viewed as a rooted forest, where the fixed points are the roots. An example is given in Figure \ref{forest}. 
Given an acyclic function $f$, if a path exists from $p$ to $q$, with $p$ closer to the root than $q$, then $f^k(q)=p$ for some $k$. Thus, 
$p\preceq q$, and so we say $f$ has the \itshape rule \upshape $p\preceq q$. In this way, each function $f$ stipulates a set of 
rules 
\[R_f=\left \{ p\preceq q \mid f^k(q) = p \text{ for } q,k\in[n] \right \}.\]
The following lemma and corollary will show that the set of ordered partitions $S_f$ has a nice structure in $\Pn$.

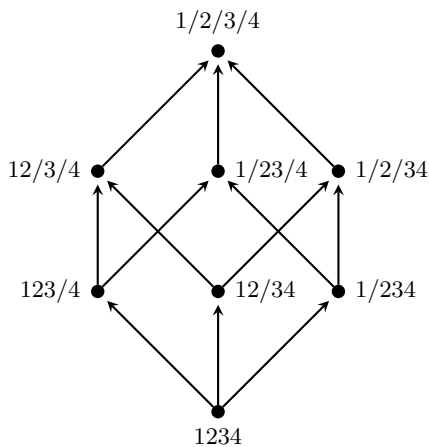
\begin{figure}[h] 
\begin{center}
\begin{tikzpicture} 
[vertex/.style={circle,draw=black,fill=black,thick,inner sep=0pt,minimum size=1.5mm}, 
redver/.style={circle,draw=red,fill=red,thick,inner sep=0pt,minimum size=1.5mm},
pre/.style={<-,shorten >=1pt,>=stealth,semithick}, 
post/.style={->,shorten >=2.5pt,>=stealth,thick},
noarrow/.style={-,thick},
redge/.style={-,thick,draw=red, fill=red}, scale=.4]

\node at ( 23,-4) (1) [vertex] [label={below, font=\footnotesize}: $1234$] {};
\node at ( 19,0) (2) [vertex] [label={left, font=\footnotesize}: {123/4}] {};
\node at ( 23,0) (3) [vertex] [label={right, font=\footnotesize}: {12/34}] {};
\node at ( 27,0) (5) [vertex] [label={right, font=\footnotesize}: {1/234}] {};
\node at ( 19,4) (6) [vertex] [label={left, font=\footnotesize}: {12/3/4}] {};
\node at ( 23,4) (10) [vertex] [label={right, font=\footnotesize}: {1/23/4}] {};
\node at ( 27,4) (15) [vertex] [label={right, font=\footnotesize}: {1/2/34}] {};
\node at ( 23,8) (30) [vertex] [label={above, font=\footnotesize}: {1/2/3/4}] {};

\draw [post] (1) -- (2);
\draw [post] (1) -- (3);
\draw [post] (1) -- (5);
\draw [post] (2) -- (6);
\draw [post] (2) -- (10);
\draw [post] (3) -- (6);
\draw [post] (3) -- (15);
\draw [post] (5) -- (10);
\draw [post] (5) -- (15);
\draw [post] (6) -- (30);
\draw [post] (10) -- (30);
\draw [post] (15) -- (30);

\end{tikzpicture}
\caption{The Cube Under the Singleton Partition 1/2/3/4}
\label{cube}
\end{center}
\end{figure}

\begin{figure}[h] 
\begin{center}\begin{tikzpicture} 
[vertex/.style={circle,draw=black,fill=black,thick,inner sep=0pt,minimum size=1.5mm}, 
pre/.style={<-,shorten >=1pt,>=stealth,semithick}, 
post/.style={->,shorten >=2.5pt,>=stealth,thick},
noarrow/.style={-,thick}, scale=.5]

\node at (2,2) (2) [vertex] [label={below, font=\small}: 2] {};
\node at (0,0) (1) [vertex] [label={left, font=\small}: 1] {};
\node at (4,0) (5) [vertex] [label={right, font=\small}: 5] {};

\node at (7,1) (7) [vertex] [label={below, font=\small}: 7] {};
\node at (9,3) (4) [vertex] [label={left, font=\small}: 4] {};
\node at (11,1) (6) [vertex] [label={right, font=\small}: 6] {};
\node at (11,-1) (3) [vertex] [label={right, font=\small}: 3] {};

\draw [post] (5) -- (2);
\draw [post] (1) -- (2);

\draw [post] (7) -- (4);
\draw [post] (3) -- (6);
\draw [post] (6) -- (4);

\path[->,every loop/.style={looseness=30}] (4) edge  [in=45,out=135,shorten >=2.5pt, shorten <=2.5pt, >=stealth,loop, thick, decoration={markings, mark=at position .999 with {\arrow[line width=2pt]{>}}},
    postaction={decorate}] node {} (); 
		
\path[->,every loop/.style={looseness=30}] (2) edge  [in=45,out=135,shorten >=2.5pt, shorten <=2.5pt, >=stealth,loop, thick, decoration={markings, mark=at position .999 with {\arrow[line width=2pt]{>}}},
    postaction={decorate}] node {} ();

\end{tikzpicture}
\caption{Acyclic Function Represented by a Rooted Forest}
\label{forest}
\end{center}
\end{figure}
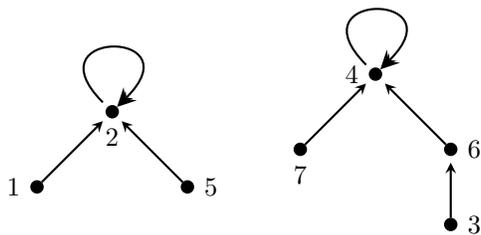

\begin{lemma} \label{lemma:party}
If a singleton partition $A$ in $\Pn$ satisfies the rules $R_f$ given by an 
acyclic function $f$ then every ordered partition $B\leq A$ 
in $\Pn$ also satisfies the rules $R_f$.
If an ordered partition $B$ in $\Pn$ satisfies the rules $R_f$ then there exists 
at least one singleton partition $A \geq B$ in $\Pn$ that satisfies the 
precedence rules 
$R_f$.
\end{lemma}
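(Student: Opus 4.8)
The plan is to prove the two implications separately. The first (downward) statement is the routine one: I would show that the \emph{precequals} relation is preserved as one moves down in $\Pn$, so every rule in $R_f$ that holds for the singleton partition $A$ survives the passage to any coarsening $B \le A$. The second (upward) statement is the substantive one: given an ordered partition $B$ satisfying $R_f$, I would construct an explicit singleton refinement $A \ge B$ by ordering the elements inside each block of $B$ according to the rules $R_f$, the crucial point being that acyclicity of $f$ guarantees such an ordering exists.

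For the first statement, recall that the cover relations in $\Pn$ merge two consecutive parts. I would first treat a single merge: if $B$ is obtained from an ordered partition $C$ by merging two adjacent blocks, then relabeling the block indices after the merge is a weakly increasing operation, so the block-index function satisfies $\beta_B(p) \le \beta_B(q)$ whenever $\beta_C(p) \le \beta_C(q)$. Hence any rule $p \preceq q$ that holds in $C$ also holds in $B$. Since every $B \le A$ is reached from the singleton partition $A$ by a finite sequence of such merges, induction along a saturated chain from $A$ down to $B$ shows that $B$ satisfies every rule in $R_f$.

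For the second statement, I would first observe that the rules $R_f$ are transitive and free of cycles among distinct elements: viewing $f$ as the rooted forest described above, $p \preceq q$ lies in $R_f$ exactly when $p$ is an ancestor of $q$, and in a forest no two distinct nodes can each be an ancestor of the other, so the relation admits a linear extension. Now let $B = (\beta_1,\dots,\beta_r)$ satisfy $R_f$. For each block $\beta_j$, the restriction of $R_f$ to $\beta_j$ is again acyclic, so I would choose a linear extension of it and replace $\beta_j$ by the corresponding sequence of singletons, keeping the blocks in their original order. This yields a singleton partition $A$ with $A \ge B$ in $\Pn$. To check that $A$ satisfies $R_f$, take a rule $p \preceq q$: if $p$ and $q$ lie in different blocks of $B$ then $\beta_B(p) < \beta_B(q)$ because $B$ satisfies the rule, and every element of an earlier block precedes every element of a later block in $A$, so the rule persists; if $p$ and $q$ lie in the same block, the chosen linear extension places $p$ before $q$, and the rule again holds.

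The main obstacle is the upward direction, specifically the need to order the elements \emph{within} each block consistently, since a priori the inherited rules on a single block could chain back on themselves and admit no valid order. This is exactly where acyclicity of $f$ is essential: it rules out a cycle $p \preceq q \preceq \cdots \preceq p$, so each block's restricted relation is acyclic and a linear extension (topological sort) always exists. A secondary point to verify is that reordering elements within blocks while preserving the block order genuinely produces an element of $\Pn$ lying above $B$, which follows directly from the description of covers in $\Pn$ as merges of consecutive parts.
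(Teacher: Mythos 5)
Your proposal is correct and follows essentially the same route as the paper: the downward direction via coarsenings preserving the precedence relation, and the upward direction by refining each block of $B$ using an order compatible with the forest structure of $f$. The only cosmetic difference is that you produce the within-block order as a topological sort (linear extension) of the restricted ancestor relation, where the paper concretely orders elements by their distance from the root of each tree --- which is just one particular linear extension.
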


\begin{proof} The first statement is obvious.  If $A$ is a singleton partition with the rule $p \preceq q$ in $A$, and $B \leq A$ 
is an 
ordered partition in $\Pn$,
then the parts of $B$ are unions of consecutive parts of $A$. Hence $p \preceq q$ in $B$ as well.

The second statement is slightly less obvious.
Let $f$ be an acyclic function, 
and suppose that $B$ in $\Pn$ is a nonsingleton ordered partition that satisfies the rules $R_f$.
We must show there is a singleton partition $A \geq B$ above it in $\Pn$ that also satisfies $R_f$. Consider a part 
$\beta$ of $B$ that is not a singleton. If no pair of elements in $\beta$ has a rule associated with it, then the elements 
of $\beta$ can be ordered arbitrarily. 
Otherwise, there are elements of $\beta$ that have rules imposed on them. Consider the elements in the intersection of $\beta$ 
and a 
rooted tree associated with $f$. We order those elements by their distance from the root. (Those elements having the same 
distance from the root can be put in any order 
with respect to each other.) We do this for every rooted tree associated with $f$ to impose an order on all of $\beta$. 
Doing the same to each part will result in a singleton partition $A$ above $B$ satisfying the rules of $f$.
\end{proof}

\begin{corollary}\label{cubes}
The set of ordered partitions of $[n]$ satisfying the rules of an acyclic function is a union of $(n-1)$-cubes in $\Pn$.
\end{corollary}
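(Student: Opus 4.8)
The plan is to combine the two halves of Lemma \ref{lemma:party} into a single set equality and then read off the cube structure. Write $T_f$ for the set of ordered partitions of $[n]$ satisfying the rules $R_f$ of the acyclic function $f$ (this is the set appearing in the statement). Let $\mathcal{A}$ be the set of singleton partitions that satisfy $R_f$, and for each $A \in \mathcal{A}$ put $C_A = \{ B \in \Pn : B \le A \}$, the down-set of $A$. My claim is that $T_f = \bigcup_{A \in \mathcal{A}} C_A$ and that each $C_A$ is an $(n-1)$-cube; the corollary follows at once.

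First I would make precise the cube structure of a single down-set, which was asserted informally in the discussion preceding the lemma. A singleton partition $A = a_1/a_2/\cdots/a_n$ has $n-1$ \emph{dividers}, one between each consecutive pair of singletons, and every ordered partition $B \le A$ is obtained by choosing some subset of these dividers to delete, merging the corresponding runs of consecutive blocks. Since each of the $n-1$ dividers may be kept or deleted independently, this sets up an order isomorphism between $C_A$ and the Boolean lattice on the $n-1$ dividers, which is exactly the $(n-1)$-cube; Figure \ref{cube} is the case $n = 4$.

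Next I would verify the two inclusions. The containment $\bigcup_{A \in \mathcal{A}} C_A \subseteq T_f$ is immediate from the first statement of Lemma \ref{lemma:party}: if $A \in \mathcal{A}$ satisfies $R_f$, then every $B \le A$ satisfies $R_f$, so $C_A \subseteq T_f$. The reverse containment $T_f \subseteq \bigcup_{A \in \mathcal{A}} C_A$ is the second statement of Lemma \ref{lemma:party}: every $B \in T_f$ lies below some singleton partition $A$ that also satisfies $R_f$, and this $A$ belongs to $\mathcal{A}$, so $B \in C_A$. Together these give the desired equality and exhibit $T_f$ as a union of $(n-1)$-cubes.

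Because Lemma \ref{lemma:party} already carries all of the combinatorial content, I do not expect a genuine obstacle here. The only point that demands a little care is the first paragraph: confirming that the down-set below a singleton partition realizes the \emph{entire} $(n-1)$-cube --- that every subset of dividers does occur and that the covering relations of $\Pn$ restrict correctly --- rather than some proper subposet. Once this identification is nailed down, the corollary is a formal consequence of the lemma.
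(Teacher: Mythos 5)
Your proof is correct and follows the same route as the paper's: both halves of Lemma \ref{lemma:party} give the two inclusions, and the cube structure of the down-set below a singleton partition finishes the argument. The only difference is that you spell out the divider-deletion bijection with the Boolean lattice, which the paper simply asserts in the discussion preceding the lemma --- a worthwhile bit of added rigor, but not a different proof.
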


\begin{proof} By the above lemma, we can account for all the ordered partitions by only considering the singleton partitions satisfying the acyclic function, and all the ordered partitions below them. The result follows since the ordered partitions below a singleton partition form an $(n-1)$-cube.
\end{proof}

Lemma \ref{lemma:party} and Corollary \ref{cubes} tell us that once we know which singleton partitions appear in $S_f$, then we 
know that $S_f$ is precisely those singletons and all the ordered partitions under them in $\Pn$. Because of their importance we 
will start to label the singleton partitions without slashes, e.g. $1/2/3/4\to 1234$, unless we need to distinguish them from the 
ordered partition with one part. In the next section, we turn our attention to a geometric object isomorphic to $\Pn$, the 
permutahedron.

\section{The Permutahedron} \label{sec:permutahedron}
 
In this section we show how the alternating sums giving $c_f$ when $f$ is acyclic are related to 
the Euler characteristic of the permutahedron and use this correspondence to show that $c_f = 0$.   
It is well-known that the poset of the ordered partitions is isomorphic to the face lattice of the permutahedron 
\cite[Fact 4.1]{Simion97}.
Specifically, each vertex on the permutahedron represents a singleton partition, the edges incident to a vertex represent the ordered partitions just 
below that singleton partition in the poset, the faces adjacent to those edges represent the ordered partitions just below again, and so on, 
until the permutahedron itself represents the ordered partition with one part at the bottom of the poset. 
Note that two vertices are adjacent if one can be obtained by a single swap of consecutive elements. For example, 315624 is adjacent to 351624. 
Figure \ref{perm3} shows the transformation from the poset on 3 element to the permutahedron on 3 elements, 
which 
in this case is a hexagon. Figure \ref{perm4} shows the poset on 4 elements as the ordinary permutahedron (truncated octahedron).

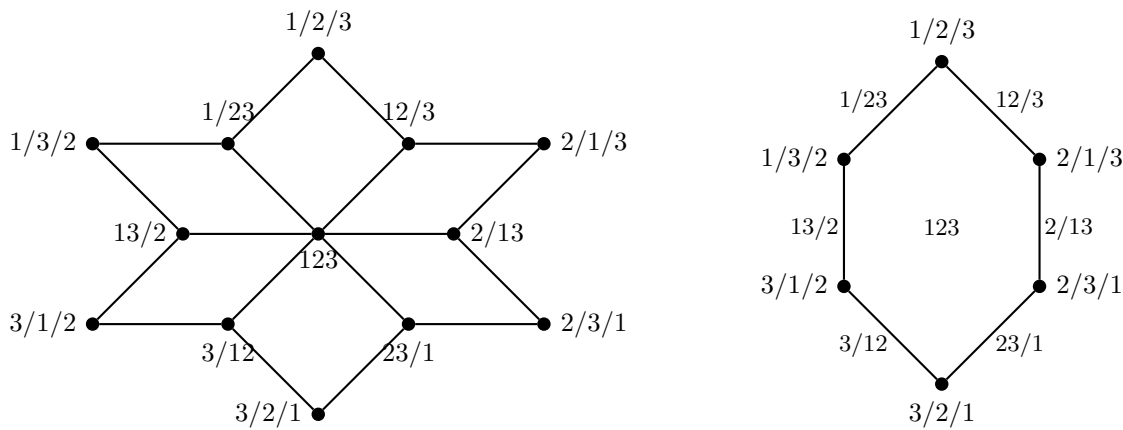
\begin{figure}[hbtp!] 
\begin{center}\begin{tikzpicture} 
[vertex/.style={circle,draw=black,fill=black,thick,inner sep=0pt,minimum size=1.5mm}, vertexb/.style={circle,draw=black,fill=black,thick,inner sep=0pt,minimum size=1.5mm}, 
pre/.style={<-,shorten >=1pt,>=stealth,semithick}, 
post/.style={->,shorten >=2.5pt,>=stealth,thick},
noarrow/.style={-,thick}, scale=.6]

\node at (0,0) (123) [vertex] [label={below, font=\small}: {123}] {};
\node at (0,-4) (3_2_1) [vertex] [label={left, font=\small}: {3/2/1}] {};
\node at (-2,-2) (3_12) [vertexb] [label={below, font=\small}: {3/12}] {};
\node at (-5,-2) (3_1_2) [vertex] [label={left, font=\small}: {3/1/2}] {};
\node at (-3,0) (13_2) [vertexb] [label={left, font=\small}: {13/2}] {};
\node at (0,4) (1_2_3) [vertex] [label={above, font=\small}: {1/2/3}] {};
\node at (-2,2) (1_23) [vertexb] [label={above, font=\small}: {1/23}] {};
\node at (-5,2) (1_3_2) [vertex] [label={left, font=\small}: {1/3/2}] {};

\node at (2,-2) (23_1) [vertexb] [label={below, font=\small}: {23/1}] {};
\node at (5,-2) (2_3_1) [vertex] [label={right, font=\small}: {2/3/1}] {};
\node at (3,0) (2_13) [vertexb] [label={right, font=\small}: {2/13}] {};
\node at (2,2) (12_3) [vertexb] [label={above, font=\small}: {12/3}] {};
\node at (5,2) (2_1_3) [vertex] [label={right, font=\small}: {2/1/3}] {};

\draw [noarrow] (123) -- (2_13);
\draw [noarrow] (123) -- (12_3);
\draw [noarrow] (123) -- (1_23);
\draw [noarrow] (123) -- (13_2);
\draw [noarrow] (123) -- (3_12);
\draw [noarrow] (123) -- (23_1);

\draw [noarrow] (2_13) -- (2_1_3);
\draw [noarrow] (2_1_3) -- (12_3);
\draw [noarrow] (12_3) -- (1_2_3);
\draw [noarrow] (1_2_3) -- (1_23);
\draw [noarrow] (1_23) -- (1_3_2);
\draw [noarrow] (1_3_2) -- (13_2);
\draw [noarrow] (13_2) -- (3_1_2);
\draw [noarrow] (3_1_2) -- (3_12);
\draw [noarrow] (3_12) -- (3_2_1);
\draw [noarrow] (3_2_1) -- (23_1);
\draw [noarrow] (23_1) -- (2_3_1);
\draw [noarrow] (2_3_1) -- (2_13);

\end{tikzpicture}
\hspace{.5in}
\begin{tikzpicture} 
[vertex/.style={circle,draw=black,fill=black,thick,inner sep=0pt,minimum size=1.5mm}, 
pre/.style={<-,shorten >=1pt,>=stealth,semithick}, 
post/.style={->,shorten >=2.5pt,>=stealth,thick},
noarrow/.style={-,thick}, scale=.65]

\node at (0,0) (3_2_1) [vertex] [label={below, font=\small}: {3/2/1}] {};
\node at (-2,2) (3_1_2) [vertex] [label={left, font=\small}: {3/1/2}] {};
\node at (2,2) (2_3_1) [vertex] [label={right, font=\small}: {2/3/1}] {};
\node at (-2,4.6) (1_3_2) [vertex] [label={left, font=\small}: {1/3/2}] {};
\node at (2,4.6) (2_1_3) [vertex] [label={right, font=\small}: {2/1/3}] {};
\node at (0,6.6) (1_2_3) [vertex] [label={above, font=\small}: {1/2/3}] {};

\draw [noarrow] (3_2_1) -- (3_1_2);
\draw [noarrow] (3_2_1) -- (2_3_1);
\draw [noarrow] (3_1_2) -- (1_3_2);
\draw [noarrow] (2_3_1) -- (2_1_3);
\draw [noarrow] (1_3_2) -- (1_2_3);
\draw [noarrow] (2_1_3) -- (1_2_3);

\node at (-1.6,.8) {\footnotesize 3/12};
\node at (1.6,.8) {\footnotesize 23/1};
\node at (-2.6,3.2) {\footnotesize 13/2};
\node at (2.6,3.2) {\footnotesize 2/13};
\node at (-1.6,5.8) {\footnotesize 1/23};
\node at (1.6,5.8) {\footnotesize 12/3};
\node at (0,3.2) {\footnotesize 123};

\end{tikzpicture}
\caption{Poset of Ordered Partitions on 3 elements: Top View (left) and as a Permutahedron (right)}
\label{perm3}
\end{center}
\end{figure}

\newcommand{\vertexsize}{1.2mm}
 
 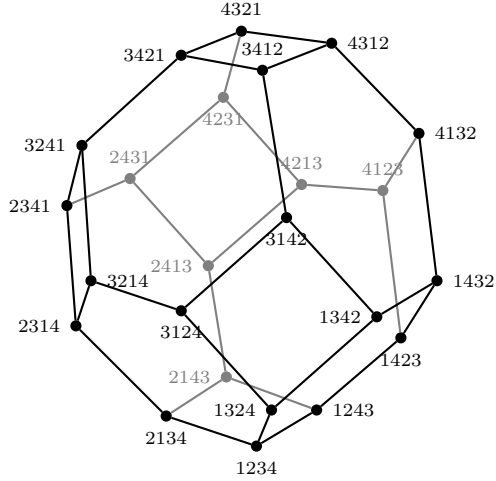
\begin{figure}[h]
 \begin{center}\begin{tikzpicture} 
 [vertex/.style={circle,draw=black,fill=black,thick,inner sep=0pt,minimum size=\vertexsize}, fadedvertex/.style={circle,draw=gray,fill=gray,thick,inner sep=0pt,minimum size=\vertexsize}, 
 pre/.style={<-,shorten >=1pt,>=stealth,semithick}, 
 post/.style={->,shorten >=2.5pt,>=stealth,thick},
 noarrow/.style={-,thick}, faded/.style={-,thick,gray},scale=.4] 
 \node at (0,0) (1234) [vertex] [label={below, font=\tiny}:$1234$] {};
 \node at ( -3,1) (2134) [vertex] [label={below, font= \tiny}:$2134$] {};
 \node at ( 0.5,1.2) (1324) [vertex] [label={left, font= \tiny}:$1324$] {};
 \node at ( 2,1.2) (1243) [vertex] [label={right, font= \tiny}:$1243$] {};
 \node at ( -1,2.3) (2143) [fadedvertex] [label={left, font=\tiny, gray}:$2143$] {};
 \node at ( -6,4) (3124) [vertex] [label={left, font=\tiny}:$2314$] {};
 \node at ( -2.5,4.5) (2314) [vertex] [label={below, font=\tiny}:$3124$] {};
 \node at ( 4,4.3) (1423) [vertex] [label={left, font=\tiny}:$1342$] {};
 \node at ( 4.8,3.6) (1342) [vertex] [label={below, font=\tiny}:$1423$] {};
 \node at ( -5.5,5.5) (3214) [vertex] [label={right, font=\tiny}:$3214$] {};
 \node at ( -1.6,6) (3142) [fadedvertex] [label={left, font=\tiny, gray}:$2413$] {};
 \node at ( 6,5.5) (1432) [vertex] [label={right, font=\tiny}:$1432$] {};
 \node at ( -6.3,8) (4123) [vertex] [label={left, font=\tiny}:$2341$] {};
 \node at ( -4.2,8.9) (4132) [fadedvertex] [label={above, font=\tiny,gray}:$2431$] {};
 \node at ( 1,7.6) (2413) [vertex] [label={below, font=\tiny}:$3142$] {};
 \node at ( 4.2,8.5) (2341) [fadedvertex] [label={above, font=\tiny,gray}:$4123$] {};
 \node at ( -5.8,10) (4213) [vertex] [label={left, font=\tiny}:$3241$] {};
 \node at ( 1.5,8.7) (3241) [fadedvertex] [label={above, font=\tiny,gray}:$4213$] {};
 \node at ( 5.4,10.4) (2431) [vertex] [label={right, font=\tiny}:$4132$] {};
 \node at ( -1.1,11.6) (4231) [fadedvertex] [label={below, font=\tiny,gray}:$4231$] {};
 \node at ( -2.5,13) (4312) [vertex] [label={left, font=\tiny}:$3421$] {};
 \node at ( 0.2,12.5) (3412) [vertex] [label={above, font=\tiny}:$3412$] {};
 \node at ( 2.5,13.4) (3421) [vertex] [label={right, font=\tiny}:$4312$] {};
 \node at ( -.5,13.8) (4321) [vertex] [label={above, font=\tiny}:$4321$] {};
 \draw [noarrow] (2134) -- (3124);
 \draw [faded] (2134) -- (2143);
 \draw [faded] (1243) -- (2143);
 \draw [noarrow] (1243) -- (1342);
 \draw [faded] (2143) -- (3142);
 \draw [faded] (3142) -- (4132);
 \draw [faded] (3142) -- (3241);
 \draw [faded] (4132) -- (4231);
 \draw [faded] (3241) -- (4231);
 \draw [faded] (2341) -- (3241);
 \draw [faded] (2341) -- (2431);
 \draw [faded] (1342) -- (2341);
 \draw [noarrow] (1342) -- (1432);
 \draw [noarrow] (4123) -- (4213);
 \draw [faded] (4123) -- (4132);
 \draw [faded] (4231) -- (4321);
 \draw [noarrow] (3412) -- (4312);
 \draw [noarrow] (3412) -- (3421);
 \draw [noarrow] (1234) -- (2134);
 \draw [noarrow] (1234) -- (1324);
 \draw [noarrow] (1234) -- (1243);
 \draw [noarrow] (1324) -- (2314);
 \draw [noarrow] (1324) -- (1423);
 \draw [noarrow] (3124) -- (3214);
 \draw [noarrow] (3124) -- (4123);
 \draw [noarrow] (2314) -- (3214);
 \draw [noarrow] (2314) -- (2413);
 \draw [noarrow] (1423) -- (2413);
 \draw [noarrow] (1423) -- (1432);
 \draw [noarrow] (3214) -- (4213);
 \draw [noarrow] (1432) -- (2431);
 \draw [noarrow] (2431) -- (3421);
 \draw [noarrow] (4213) -- (4312);
 \draw [noarrow] (4312) -- (4321);
 \draw [noarrow] (3421) -- (4321);
 \draw [noarrow] (2413) -- (3412);
 \end{tikzpicture}
 \caption{Poset of Ordered Partitions on 4 elements as a Permutahedron}
 \label{perm4}
 \end{center}
 \end{figure}

In Figure \ref{perm4}, we only label the singleton partitions at the vertices, but the labeling of the other ordered partitions 
would be similar to Figure \ref{perm3}.  

The permutahedron $\Pi_n$ is often defined as the convex hull of the points \[P_{\sigma}=(\sigma(1), \sigma(2), \ldots, 
\sigma(n))\] 
for every $\sigma\in\Sym_n$. It is a convex polytope, and in particular, it is contractible to a point.  
Thus, the permutahedron has Euler characteristic 1 \cite{Simion97}.
Since the ordered partitions are in bijection with the faces of the permutahedron, the alternating sum of the ordered partitions 
is precisely the Euler characteristic of the permutahedron, and this gives us a second proof of Lemma \ref{lemma:stirling}. 

We 
adopt a slightly different convention, relabeling the vertices of the $\Pi_n$ to 
$P_{\sigma^{-1}}$. We will denote this relabeled permutahedron by $\Pi_n'$. Figure \ref{2perm} 
shows $\Pi_3$ in $\mathbb{R}^3$, the relabeled $\Pi_n'$, 
and the correspondence between $x_1\leq x_2$ and $1\preceq 2$. Figure \ref{2perm} also shows the fact that $\Pi_n$, and thus 
$\Pi_n'$, is an $(n-1)$-dimensional object, since all the points lie in the hyperplane $x_1+x_2+\cdots+x_n=\dbinom{n+1}{2}$. In 
general $x_i\leq x_j$ in $\Pi_n$ corresponds to $i\preceq j$ in $\Pi_n'$.

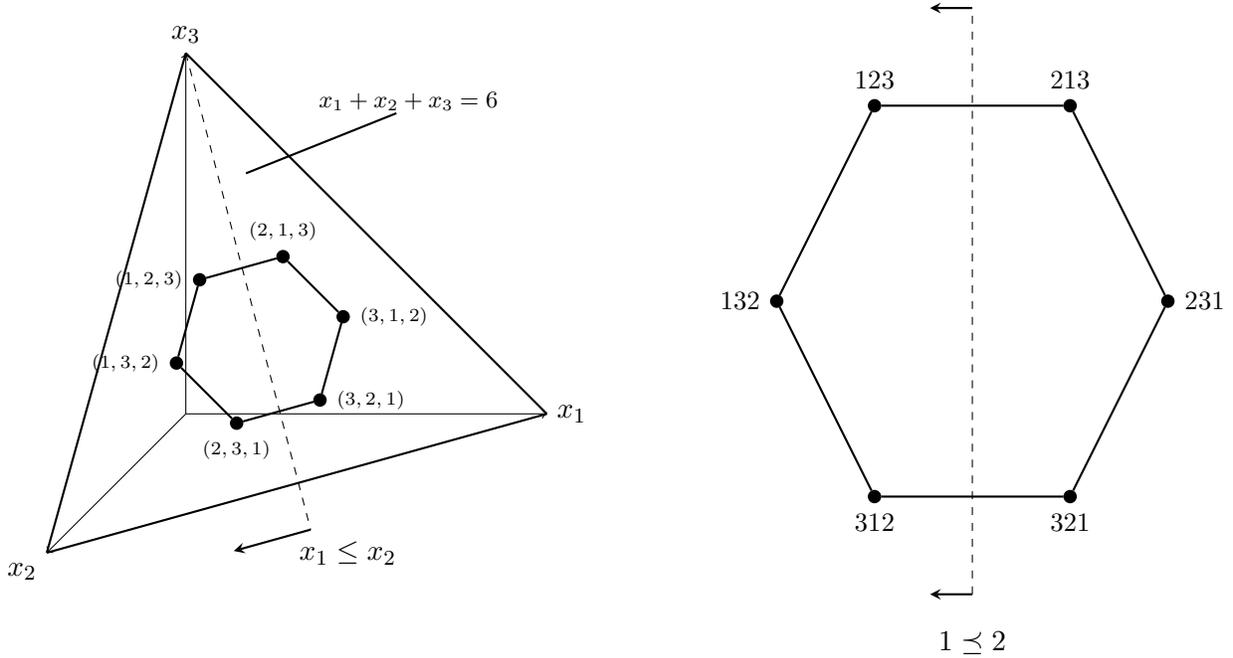
\begin{figure}
\begin{tikzpicture}
[vertex/.style={circle,draw=black,fill=black,thick,inner sep=0pt,minimum size=1.5mm}, 
pre/.style={<-,shorten >=1pt,>=stealth,semithick}, 
post/.style={->,shorten >=2.5pt,>=stealth,thick},
noarrow/.style={-,thick}, scale=.8]
  \draw [->] (0,0) -- (6,0,0) node [right] {$x_1$};
  \draw [->] (0,0) -- (0,6,0) node [above] {$x_3$};
  \draw [->] (0,0) -- (0,0,6) node [below left] {$x_2$};


  \draw[noarrow] (0,0,6) -- (0,6,0);
 \draw[noarrow] (6,0,0) -- (0,6,0);
  \draw[noarrow] (0,0,6) -- (6,0,0);
  
  \node at (1,3,2) (123) [vertex] [label={left, font=\tiny}:{$(1,2,3)$}] {};
  \node at (1,2,3) (132) [vertex] [label={left, font=\tiny}:{$(1,3,2)$}] {};
  \node at (2,3,1) (213) [vertex] [label={above, font=\tiny}:{$(2,1,3)$}] {};
  \node at (2,1,3) (231) [vertex] [label={below, font=\tiny}:{$(2,3,1)$}] {};
  \node at (3,2,1) (312) [vertex] [label={right, font=\tiny}:{$(3,1,2)$}] {};
  \node at (3,1,2) (321) [vertex] [label={right, font=\tiny}:{$(3,2,1)$}] {};
  
  \draw[noarrow] (123) -- (213);
  \draw[noarrow] (123) -- (132);
  \draw[noarrow] (213) -- (312);
  \draw[noarrow] (312) -- (321);
  \draw[noarrow] (321) -- (231);
  \draw[noarrow] (231) -- (132);
  
  \draw[noarrow] (1,4,0) -- (3.5,5,0);
  
  \node at (3.7,5.2,0) {\footnotesize $x_1+x_2+x_3=6$};
	
	\draw[dashed] (0,6,0) -- (4,0,5);
	\draw[post] (4,0,5) -- (3,0,6);
	\node at (5,0,6) {$x_1\leq x_2$};
  
  \node at (0,-4) {};

\end{tikzpicture}
\hspace{.5in}
\begin{tikzpicture}
[vertex/.style={circle,draw=black,fill=black,thick,inner sep=0pt,minimum size=1.5mm}, 
pre/.style={<-,shorten >=1pt,>=stealth,semithick}, 
post/.style={->,shorten >=2.5pt,>=stealth,thick},
noarrow/.style={-,thick}, scale=1.3]
\node at (1,0) (3_2_1) [vertex] [label={below, font=\small}: {321}] {};
\node at (-1,0) (3_1_2) [vertex] [label={below, font=\small}: {312}] {};
\node at (2,2) (2_3_1) [vertex] [label={right, font=\small}: {231}] {};
\node at (-2,2) (1_3_2) [vertex] [label={left, font=\small}: {132}] {};
\node at (1,4) (2_1_3) [vertex] [label={above, font=\small}: {213}] {};
\node at (-1,4) (1_2_3) [vertex] [label={above, font=\small}: {123}] {};

\draw [noarrow] (3_2_1) -- (3_1_2);
\draw [noarrow] (3_2_1) -- (2_3_1);
\draw [noarrow] (3_1_2) -- (1_3_2);
\draw [noarrow] (2_3_1) -- (2_1_3);
\draw [noarrow] (1_3_2) -- (1_2_3);
\draw [noarrow] (2_1_3) -- (1_2_3);

\draw[dashed] (0,-1) -- (0,5);

\draw[post] (0,-1) -- (-.5,-1);
\draw[post] (0,5) -- (-.5,5);

\node at (0,-1.5) {$1\preceq 2$};

\end{tikzpicture}
\caption{The permutahedra $\Pi_3$ (left) and $\Pi_3'$ (right)}
\label{2perm}
\end{figure}

\begin{lemma}\label{halfspace}
The singleton partitions in $\Pi_n'$ satisfying the precedence rule $i\preceq j$ are contained 
in the corresponding half-space $x_i\leq x_j$ in $\Pi_n$. 
\end{lemma}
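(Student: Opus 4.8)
The plan is to unwind the relabeling convention introduced just before the lemma and read off, for each vertex, what its coordinates in $\Pi_n$ actually are; the inequality $x_i \le x_j$ then becomes a transparent statement about positions in a singleton partition.

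First I would fix a singleton partition $A$ and encode it as the permutation $\pi \in \Sym_n$ whose one-line notation lists the parts of $A$ in order, so that the element occupying the $p$-th position of $A$ is $\pi(p)$. With this convention the precedence relation is immediate: since $A$ consists of singletons, $i \preceq j$ holds exactly when $i$ sits in an earlier (or the same) position as $j$, i.e. when $\pi^{-1}(i) \le \pi^{-1}(j)$, where $\pi^{-1}(k)$ denotes the position of $k$ in $A$.

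Next I would invoke the relabeling $P_\sigma \mapsto P_{\sigma^{-1}}$ defining $\Pi_n'$. Under it, the vertex carrying the label $A$ (equivalently $\pi$) is the geometric point $P_{\pi^{-1}} = (\pi^{-1}(1), \pi^{-1}(2), \ldots, \pi^{-1}(n))$ of the original permutahedron $\Pi_n$. Hence its $k$-th coordinate is exactly $x_k = \pi^{-1}(k)$, the position of $k$ in $A$. Combining the two observations gives the chain $i \preceq j \text{ in } A \iff \pi^{-1}(i) \le \pi^{-1}(j) \iff x_i \le x_j$, so the vertex labeled $A$ lies in the half-space $x_i \le x_j$, as claimed. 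Since distinct elements occupy distinct positions, $x_i \ne x_j$ at every vertex, so it is irrelevant whether the half-space is taken open or closed.

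The one genuinely delicate point, and the reason the relabeling to $P_{\sigma^{-1}}$ was introduced in the first place, is keeping the inverse straight: the coordinate $x_k$ must record the \emph{position} of $k$, not the value appearing in position $k$. Without the inverse one would have $x_k = \pi(k)$, and the correspondence would fail; for instance the singleton partition $3/1/2$ satisfies $1 \preceq 2$, yet $\pi(1) = 3 > 1 = \pi(2)$. I expect this bookkeeping to be the only obstacle: once the coordinates are identified with positions, the lemma is just the definition of $\preceq$ rewritten as an inequality.
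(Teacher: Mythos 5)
Your proof is correct and follows essentially the same route as the paper's: both unwind the relabeling $\Pi_n \to \Pi_n'$ to identify the $k$-th coordinate of the vertex labeled $\sigma^{-1}$ with the position $\sigma(k)$ of $k$ in that singleton partition, whence $i \preceq j$ is literally the inequality $x_i \le x_j$. Your explicit warning about which permutation versus its inverse supplies the coordinates (with the counterexample $3/1/2$) is a helpful expansion of the bookkeeping the paper compresses into one line.
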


\begin{proof} 
A permutation $\sigma^{-1}$ satisfies $\sigma^{-1}_i \preceq \sigma^{-1}_j$ precisely when $\sigma(i) \leq \sigma(j)$.
Hence we see that if the permutation (or singleton partition) $\sigma^{-1}$ satisfies the precedence relation $i \preceq j$ 
then the vertex  $(\sigma(1), \sigma(2), \ldots, \sigma(n) )$ is in the half-space $x_i\leq x_j$ and vice versa.  
\end{proof}

We are now ready to prove the main result of this section.

\begin{proposition} \label{prop:zerocoefficient}
If $f: [n] \to [n]$ is not bijective, then the coefficient $c_f=0$ is zero.  
\end{proposition}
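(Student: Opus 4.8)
The plan is to reduce to acyclic functions via flattening and then read off $c_f$ as an Euler characteristic of a region of the permutahedron. By Lemma~\ref{lemma:flattenorbits} we have $c_f=(-1)^{n-|D_f|}c_{\overline f}$, so it suffices to show $c_{\overline f}=0$. The flattened function $\overline f$ is acyclic, and an acyclic function is a bijection only when it is the identity, since any cycle of length at least two would contradict acyclicity; hence, if $\overline f$ were the identity on $D_f$, then every element of $N_f$ would be a fixed point and $f$ would be a permutation. As $f$ is not bijective, $\overline f$ is not the identity. Writing $d=|D_f|$ and $g=\overline f$, the rooted forest of $g$ therefore has at least one edge, so $g$ determines a nonempty set of nontrivial precedence rules $R_g$. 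I want to prove that $c_g=\sum_{B\in S_g}(-1)^{d-|B|}=0$.

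First I would pass to the permutahedron. Under the isomorphism between $\Pn$ and the face lattice of $\Pi_d'$, an ordered partition $B$ with $|B|$ parts is a face of dimension $d-|B|$, so $(-1)^{d-|B|}=(-1)^{\dim F}$ and $c_g$ is the alternating sum of $(-1)^{\dim F}$ over the faces $F$ that lie in $S_g$. By Lemma~\ref{lemma:party} and Corollary~\ref{cubes}, $S_g$ consists of exactly those faces that contain at least one singleton partition satisfying $R_g$, and by Lemma~\ref{halfspace} these ``good'' vertices are precisely the vertices of $\Pi_d$ in the convex region $H=\bigcap_{(i\preceq j)\in R_g}\{x_i\le x_j\}$ cut out by the rules.

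Next I would exploit that, by Lemma~\ref{lemma:party}, $S_g$ is a down-set of $\Pn$; since coarsening an ordered partition corresponds to passing to a larger face, the complementary collection $K$ of ``bad'' faces (those containing no good vertex) is upward closed in $\Pn$, hence closed under taking subfaces of $\Pi_d'$, i.e.\ a genuine polytopal subcomplex with $\sum_{F\in K}(-1)^{\dim F}=\chi(|K|)$. Splitting the identity $\sum_{B\vdash[d]}(-1)^{d-|B|}=\chi(\Pi_d')=1$, already noted as a reinterpretation of Lemma~\ref{lemma:stirling}, into the parts indexed by $S_g$ and by $K$ then yields $c_g=1-\chi(|K|)$, reducing the whole problem to proving $\chi(|K|)=1$.

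Finally I would prove $|K|$ is contractible. Choose a leaf $\ell$ of the rooted forest of $g$ with parent $p=g(\ell)\neq\ell$; the only rule constraining the coordinate $x_\ell$ is $x_p\le x_\ell$, which is violated on the entire facet of $\Pi_d$ where $x_\ell$ is smallest, so that facet, a copy of $\Pi_{d-1}$, lies in $|K|$. Sweeping $x_\ell$ downward should give a deformation retraction of $|K|$ onto this facet, whence $\chi(|K|)=\chi(\Pi_{d-1})=1$ and $c_g=0$. The main obstacle is exactly this last step: making the retraction of the non-convex bad region precise and handling the boundary faces where coordinates tie. If the retraction proves awkward, an equivalent route is to build a fixed-point-free sign-reversing involution on $S_g$ that toggles whether $\ell$ forms a singleton part or is merged with the adjacent part, directly pairing the $(-1)^{d-|B|}$ terms and forcing $c_g=0$.
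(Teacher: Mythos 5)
Your reduction and bookkeeping are sound, and in one place more careful than the paper itself: you correctly observe that the flattened function $\overline{f}$ of a non-bijective $f$ cannot be the identity, so the rule set $R_g$ is nonempty (the paper's proof needs this implicitly, since with no rules the alternating sum is $1$, not $0$). Your identity $c_g=1-\chi(|K|)$ is also valid: since coarsening preserves the rules (the argument in Lemma~\ref{lemma:party}), $S_g$ is closed under passing to larger faces, so the bad collection $K$ is closed under taking subfaces, is a genuine subcomplex of the boundary of $\Pi_d'$ (the top cell is good), and its alternating face count is $\chi(|K|)$. But here you diverge from the paper, which never touches the non-convex bad region: it intersects $\Pi_n'$ with the half-spaces $x_{f(j)}\leq x_j$ to obtain a \emph{convex} polytope $\G_f$, splits its faces into $\Gamma_f$ (corresponding to $S_f$) and the new faces $\Delta_f$ lying on the hyperplanes $x_i=x_j$, notes that every piece of $\Delta_f$ contains the center point $x_1=\cdots=x_n$ (so $\Delta_f$ is star-shaped, hence contractible, $\chi(\Delta_f)=1$), and concludes $\chi(\Gamma_f)=\chi(\G_f)-\chi(\Delta_f)=1-1=0$. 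That convexity trick sidesteps exactly the step where your primary route stalls: as you admit, the ``sweep $x_\ell$ downward'' retraction of $|K|$ onto the facet $x_\ell=1$ is not constructed, and it is not clear it can be---a face can be bad because of rules in a tree not containing $\ell$ at all, $|K|$ is a union of closed faces rather than a region one can sweep coordinatewise without leaving it, and (a minor slip) the rules constraining $x_\ell$ include $g^k(\ell)\preceq\ell$ for every ancestor, not just the parent. As written, the main argument has a genuine gap at its decisive step.

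Your one-sentence fallback, however, does work, and once completed it is arguably cleaner than the paper's proof because it bypasses topology entirely. Fix a leaf $\ell$ of the forest of $g$ with parent $p=g(\ell)\neq\ell$, and define a map on $S_g$: if $\{\ell\}$ is a part of $B$, merge it with the part immediately preceding it; otherwise remove $\ell$ from its part and insert $\{\ell\}$ as a new part immediately after the remainder. Merging consecutive parts preserves all rules; splitting $\ell$ off to a later part cannot violate any rule because $\ell$ is a leaf, so every rule involving $\ell$ has the form $a\preceq\ell$; and---the key point your sketch omits---$\{\ell\}$ can never be the \emph{first} part of a good partition, since the rule $p\preceq\ell$ would force $p$ into a weakly earlier part. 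Hence the map is a well-defined, fixed-point-free involution on $S_g$ that changes $|B|$ by exactly one, so $c_g=\sum_{B\in S_g}(-1)^{d-|B|}=0$. I recommend promoting this involution from fallback to main argument; in its present form, with the retraction unproven, your proof is incomplete.
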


\begin{proof}
Let $f:[n] \to [n]$ be an acyclic function.  Let $R_f = \{ f(j)=i \preceq j \}$ denote the set of precedence rules 
determined by $f$.
To each precedence rule $f(j) = i \preceq j$ in $R_f$ we can assign a half-space $H_{ij} := \{ (x_1,x_2,\ldots, x_n) \in  \mathbb{R}^n: x_{f(j)} = x_i \leq x_j \}$, 
and the intersection of these half-spaces with the permutahedron
$\Pi_n'$ defines a convex polytope, which we will denote by $\G_f$.  

The faces of $\G_f$ fall into two disjoint subsets: those faces that correspond to the ordered partitions in $S_f$ and those that do not.
Let $\Gamma_f$ denote the faces of $\G_f$ which correspond to elements of $S_f$ and let $\Delta_f$ denote the faces of $\G_f$ that do not.
The set of faces $\Delta_f$ are precisely the faces of $\G_f$ which lie entirely on the boundary of at least one half-space $x_i = x_j$ because they resulted from
intersecting $\Pi_n'$ with one of the half-spaces $H_{ij}$. 
Thus each face of $\Delta_f$ is a convex polytope, and we see that $\Delta_f$ is a union convex polytopes. 
Each of these convex polytopes contains the point $(x_1,x_2, \ldots, x_n)$ where $ x_1=x_2=\cdots= x_n$, so $\Delta_f$ is a contractible space.  
Hence $\Delta_f$ has Euler characteristic $\chi(\Delta_f) = 1$.  

Since $\G_f$ is a convex polytope and $\G_f = \Gamma_f \sqcup \Delta_f$ we see that
\[1=  \chi(\G_f) = \chi(\Gamma_f) + \chi( \Delta_f )  = 0+1.\]
It follows that $\chi(\Gamma_f) =0$.  
Since the summands of the alternating sum $\displaystyle \sum_{B \in S_f} (-1)^{n- |B|}$ correspond with the faces of $\Gamma_f$ we see that
$c_f = \displaystyle \sum_{B \in S_f} (-1)^{n- |B|} = \chi(\Gamma_f) =0$ as desired. 
\end{proof}

We end this section with an example demonstrating the proof of Proposition \ref{prop:zerocoefficient}.

\begin{example}
Figure \ref{permex2a} shows $\Pi_f$ with $\Gamma_f$ bold and $\Delta_f$ shaded for the terms $a_f=a_{11}a_{12}a_{13}$, which has the rules $1\preceq2$ and $1\preceq3$ and $a_f=a_{11}a_{12}a_{33}a_{34}$, which has the rules $1\preceq2$ and $3\preceq4$.

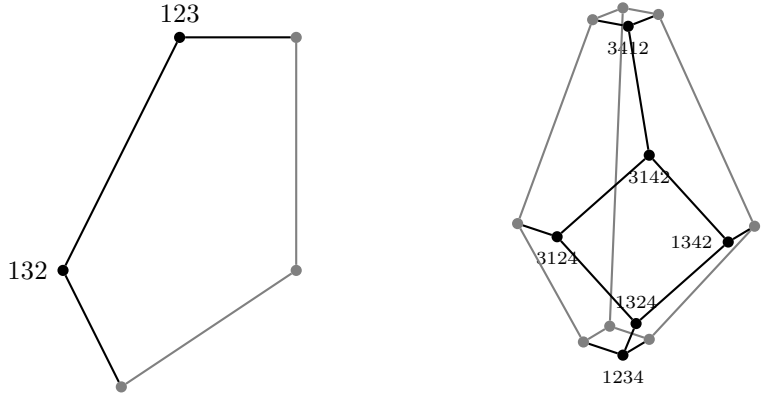
\begin{figure}[h]
  \begin{center}
	
	\begin{tikzpicture}
 [vertex/.style={circle,draw=black,fill=black,thick,inner sep=0pt,minimum size=\vertexsize}, bvertex/.style={circle,draw=blue,fill=blue,thick,inner sep=0pt,minimum size=\vertexsize}, fadedvertex/.style={circle,draw=gray,fill=gray,thick,inner sep=0pt,minimum size=\vertexsize}, 
  pre/.style={<-,shorten >=1pt,>=stealth,semithick}, 
  post/.style={->,shorten >=2.5pt,>=stealth,thick},
  noarrow/.style={-,thick}, faded/.style={-,thick,gray},scale=1.55] 
	
\node at (-2,2) (1_3_2) [vertex] [label={left, font=\small}: {132}] {};
\node at (-1,4) (1_2_3) [vertex] [label={above, font=\small}: {123}] {};
\node at (-1.5,1) (x_x_2) [fadedvertex] {};
\node at (0,4) (x_x_3) [fadedvertex] {};
\node at (0,2) (x_x_x) [fadedvertex] {};

\draw [noarrow] (x_x_3) -- (1_2_3);
\draw [noarrow] (1_2_3) -- (1_3_2);
\draw [faded] (x_x_3) -- (x_x_x);
\draw [noarrow] (1_3_2) -- (x_x_2);
\draw [faded] (x_x_2) -- (x_x_x);




\end{tikzpicture}
 \hspace{1in}
  \begin{tikzpicture} 
  [vertex/.style={circle,draw=black,fill=black,thick,inner sep=0pt,minimum size=\vertexsize}, bvertex/.style={circle,draw=blue,fill=blue,thick,inner sep=0pt,minimum size=\vertexsize}, fadedvertex/.style={circle,draw=gray,fill=gray,thick,inner sep=0pt,minimum size=\vertexsize}, 
  pre/.style={<-,shorten >=1pt,>=stealth,semithick}, 
  post/.style={->,shorten >=2.5pt,>=stealth,thick},
  noarrow/.style={-,thick}, faded/.style={-,thick,gray},scale=.35]

  \node at ( 0,0) (1234) [vertex] [label={below, font=\tiny}:$1234$] {};
  \node at ( 0.5,1.2) (1324) [vertex] [label={above, font= \tiny}:$1324$] {};
  \node at ( -2.5,4.5) (2314) [vertex] [label={below, font=\tiny}:$3124$] {};
  \node at ( 4,4.3) (1423) [vertex] [label={left, font=\tiny}:$1342$] {};
  \node at ( 1,7.6) (2413) [vertex] [label={below, font=\tiny}:$3142$] {};
  \node at ( 0.2,12.5) (3412) [vertex] [label={below, font=\tiny}:$3412$] {};
	\node at ( -1.5,0.5) (xx34) [fadedvertex] {};
  \node at ( 1,0.6) (12xx) [fadedvertex] {};
  \node at ( -4,5) (3xx4) [fadedvertex] {};
	\node at ( 5,4.9) (1xx2) [fadedvertex] {};
	\node at ( -1.15,12.75) (34xx) [fadedvertex] {};
	\node at ( 1.35,12.95) (xx12) [fadedvertex] {};
	\node at ( -0.5,1.1) (xxyy) [fadedvertex] {};
	\node at ( 0,13.2) (yyxx) [fadedvertex] {};

\draw [faded] (xx34) -- (3xx4);
\draw [faded] (34xx) -- (3xx4);
\draw [faded] (12xx) -- (1xx2);
\draw [faded] (xx12) -- (1xx2);

\draw [faded] (xx12) -- (yyxx);
\draw [faded] (34xx) -- (yyxx);
\draw [faded] (12xx) -- (xxyy);
\draw [faded] (xx34) -- (xxyy);

\draw [faded] (xxyy) -- (yyxx);

 \draw [noarrow] (3412) -- (34xx);
\draw [noarrow] (3412) -- (xx12);
  
\draw [noarrow] (1234) -- (xx34);
\draw [noarrow] (1234) -- (1324);
\draw [noarrow] (1234) -- (12xx);
  
 \draw [noarrow] (1324) -- (2314);
 \draw [noarrow] (1324) -- (1423);
  
  
\draw [noarrow] (2314) -- (3xx4);
\draw [noarrow] (2314) -- (2413);
  
 \draw [noarrow] (1423) -- (2413);
\draw [noarrow] (1423) -- (1xx2);

 \draw [noarrow] (2413) -- (3412);

  \end{tikzpicture}
  \caption{$a_f=a_{11}a_{12}a_{13}$ (left) and $a_f=a_{11}a_{12}a_{33}a_{34}$ (right)}
  \label{permex2a}
  \end{center}
  \end{figure}
\end{example}

\section{Multivariate Finite Operator Calculus} \label{sec:Hardstuff}

This terrible expansion of the determinant came from a conjecture about a transfer formula in \emph{multivariate finite operator 
calculus}
(MFOC). In this section we give a very brief overview of the objects of study in MFOC and the conjecture that gives this 
expansion. The interested reader is encouraged to read \cite{watanabe1984} for a more comprehensive description of this subject matter.

Let $k$ be a  field. Let $\{\textbf{e}_i\}_{1 \leq i \leq \ell} $ denote the standard 
$\ell$-dimensional 
basis of $k ^\ell$. 
The main objects of study in MFOC are polynomials $p\in k[x_1,\ldots,x_{\ell}]$ and operators $T\in k[[D_1,\ldots,D_{\ell}]]$, 
where $D_i$ is the partial derivative with respect to $x_i$. A sequence of polynomials 
$b_{\textbf{n}}(\textbf{x})=b_{n_1,\ldots,n_{\ell}}(x_1,...,x_{\ell})$ is called a Sheffer sequence if there is a set of operators 
$\textbf{B}=(B_1,\ldots,B_{\ell})$ with $B_i=D_iP_i$ where each $B_i:b_\textbf{n} \to 
b_{\textbf{n}-\textbf{e}_i}$ and each $P_i$ an invertible operator.  
Such a set of operators is called a delta $\ell$-tuple. The 
power series for an operator $T$ is written as

\begin{equation}
T=\sum\limits_{\textbf{n}\geq0}a_\textbf{n}\textbf{D}^\textbf{n}=\sum\limits_{n_1,\ldots,n_{\ell}\geq0}a_{n_1,\ldots,n_{\ell}}D_1^{n_1}\cdots D_{\ell}^{n_{\ell}}.
 \label{eqn:T} \end{equation}

These are all standard notations in any multivariate theory.
However, the following notation is not completely standard in MFOC.  
Given a subset $A\subseteq[{\ell}]$ we define $X_A:=\prod\limits_{i\in A}X_i$. 

The following is the Transfer Theorem from MFOC, and is essentially Theorem 1.3.6 in \cite{watanabe1984} with different notation.

\begin{theorem}[Transfer Theorem]
Let $\mathcal{J}$ denote the usual Jacobian matrix of a collection of polynomials.
Suppose $\textbf{B}=(B_1,B_2,\ldots,B_{\ell})$ is a delta $\ell$-tuple where $B_i=D_iP_i^{-1}$, then

\[
b_\textbf{n}(\textbf{x})=\textbf{P}^{\textbf{n}+\textbf{1}}{\mathcal J}(B_1,B_2,\ldots,B_{\ell})\dfrac{\textbf{x}^\textbf{n}}{\textbf{n}!}
\] is the basic sequence for $\textbf{B}$ written in terms of $\frac{\textbf{x}^\textbf{n}}{\textbf{n}!}$.

\end{theorem}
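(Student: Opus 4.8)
The plan is to prove the Transfer Theorem by showing that the right-hand side, which I will call $\tilde b_{\mathbf{n}}(\mathbf{x}) = \mathbf{P}^{\mathbf{n}+\mathbf{1}}\,\mathcal{J}(B_1,\ldots,B_\ell)\,\frac{\mathbf{x}^{\mathbf{n}}}{\mathbf{n}!}$, satisfies the two properties that characterize the basic sequence of the delta $\ell$-tuple $\mathbf{B}$, and then invoking uniqueness of that sequence. Here $\mathbf{P}^{\mathbf{n}+\mathbf{1}} = \prod_{i=1}^{\ell} P_i^{\,n_i+1}$ and $\mathcal{J}(B_1,\ldots,B_\ell) = \det\!\big(\partial B_i/\partial D_j\big)$ is the Jacobian of $\mathbf{B}$ regarded as a collection of formal power series in the commuting variables $D_1,\ldots,D_\ell$. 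The structural fact I would lean on throughout is that every operator in sight, namely the $P_i$, the $B_i$, and $\mathcal{J}(\mathbf{B})$, lies in the commutative ring $k[[D_1,\ldots,D_\ell]]$, so they all commute with one another and with each $D_i$.

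The lowering relation $B_i\,\tilde b_{\mathbf{n}} = \tilde b_{\mathbf{n}-\mathbf{e}_i}$ should fall out by direct computation. Writing $B_i = D_i P_i^{-1}$ and using commutativity, $B_i\,\mathbf{P}^{\mathbf{n}+\mathbf{1}} = D_i P_i^{-1}\mathbf{P}^{\mathbf{n}+\mathbf{1}} = \mathbf{P}^{(\mathbf{n}-\mathbf{e}_i)+\mathbf{1}}\,D_i$, and since $\mathcal{J}(\mathbf{B})$ also commutes with $D_i$ one is left with $\mathbf{P}^{(\mathbf{n}-\mathbf{e}_i)+\mathbf{1}}\,\mathcal{J}(\mathbf{B})\,D_i\,\frac{\mathbf{x}^{\mathbf{n}}}{\mathbf{n}!}$. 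The identity $D_i\,\frac{\mathbf{x}^{\mathbf{n}}}{\mathbf{n}!} = \frac{\mathbf{x}^{\mathbf{n}-\mathbf{e}_i}}{(\mathbf{n}-\mathbf{e}_i)!}$ then finishes it, yielding exactly $\tilde b_{\mathbf{n}-\mathbf{e}_i}$; note that when $n_i=0$ the operator $D_i$ annihilates the $x_i$-constant factor, so the relation correctly returns $0$. This step never uses the detailed shape of the Jacobian, only that it commutes with the $D_i$.

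The normalization is where the Jacobian earns its place, and I expect this to be the main obstacle. For $\mathbf{n}=\mathbf{0}$ one needs $\tilde b_{\mathbf{0}} = 1$: since $\frac{\mathbf{x}^{\mathbf{0}}}{\mathbf{0}!}=1$ and each $D_j$ kills the constant $1$, only the constant term of $\mathbf{P}^{\mathbf{1}}\mathcal{J}(\mathbf{B})$ survives. Because $B_i = D_i + O(|\mathbf{D}|^2)$ forces $P_i$ to have constant term $1$, and because the matrix $\big(\partial B_i/\partial D_j\big)$ reduces to the identity at $\mathbf{D}=\mathbf{0}$, that constant term equals $\det(I)=1$, so $\tilde b_{\mathbf{0}}=1$. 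The genuinely delicate point is the remaining boundary behavior that isolates the basic sequence among all solutions of the lowering recurrence: each $\ker B_i = \ker D_i$ consists of all series independent of $x_i$ and is therefore large, so the lowering relations together with $\tilde b_{\mathbf{0}}=1$ do not by themselves pin down the sequence. One must still verify that $\tilde b_{\mathbf{n}}$ carries the leading term $\frac{\mathbf{x}^{\mathbf{n}}}{\mathbf{n}!}$ and the correct vanishing at the origin, and this is precisely the content that the Jacobian factor is engineered to supply.

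For that last step I would exploit the fact that the multivariate transfer formula is, at bottom, the Lagrange--Good inversion formula. The basic sequence of $\mathbf{B}$ has generating function $\sum_{\mathbf{n}} \tilde b_{\mathbf{n}}(\mathbf{x})\,\mathbf{t}^{\mathbf{n}} = e^{\,\mathbf{x}\cdot\bar{\mathbf{B}}(\mathbf{t})}$, where $\bar{\mathbf{B}}$ is the compositional inverse of $\mathbf{B}$, and extracting the coefficient of $\mathbf{t}^{\mathbf{n}}$ by Lagrange--Good inversion produces exactly the Jacobian weight $\mathcal{J}(\mathbf{B})$ together with the factor $\mathbf{P}^{\mathbf{n}+\mathbf{1}}=\prod_i (D_i/B_i)^{n_i+1}$. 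Reconciling the operator-theoretic and generating-function descriptions, in effect recognizing the Pincherle-derivative bookkeeping of multivariate finite operator calculus as Lagrange--Good inversion, is the technical heart of the argument; once it is in place, uniqueness of the basic sequence completes the proof. Since this is Theorem 1.3.6 of \cite{watanabe1984} recast in the present notation, I would ultimately defer the full inversion computation to that reference.
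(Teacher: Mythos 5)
The first thing to say is that the paper does not prove this statement at all: it is imported, up to notation, from Watanabe, with the text explicitly noting it ``is essentially Theorem 1.3.6 in \cite{watanabe1984}.'' So there is no internal argument to compare yours against, and your decision to ultimately defer the hard computation to that reference matches --- in fact exceeds --- the paper's own level of detail. Judged on its own terms, your sketch is organized around the right frame (characterize the basic sequence, verify the candidate satisfies the characterization, invoke uniqueness), and the two verifications you carry out are correct. The lowering computation is legitimate because the Pincherle derivatives $\partial B_i/\partial D_j = B_i\theta_j - \theta_j B_i$ of operators in $k[[D_1,\ldots,D_\ell]]$ again lie in that commutative ring --- the umbral shifts $\theta_j$ appear only inside the bracket, which collapses to formal differentiation in the $D_j$ --- so $\mathcal{J}(\mathbf{B})$ really does commute with each $D_i$ and each $P_i$, and your manipulation $B_i\,\mathbf{P}^{\mathbf{n}+\mathbf{1}} = \mathbf{P}^{(\mathbf{n}-\mathbf{e}_i)+\mathbf{1}}D_i$ goes through. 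One small inaccuracy in the normalization step: you claim each $P_i$ has constant term $1$ and that the Jacobian matrix reduces to the identity at $\mathbf{D}=\mathbf{0}$, but invertibility of $P_i$ only forces nonzero constant terms $c_i$, not $c_i = 1$. What is actually true, and all you need, is that the constant terms cancel: $\mathbf{P}^{\mathbf{1}}$ contributes $\prod_i c_i$ while $\bigl(\partial B_i/\partial D_j\bigr)\big|_{\mathbf{D}=\mathbf{0}} = \delta_{ij}c_i^{-1}$ makes $\mathcal{J}(\mathbf{B})$ contribute $\prod_i c_i^{-1}$, so $\tilde b_{\mathbf{0}} = 1$ regardless of normalization.

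The one substantive gap --- which, to your credit, you flag explicitly --- is the vanishing $\tilde b_{\mathbf{n}}(\mathbf{0}) = 0$ for $\mathbf{n}\neq\mathbf{0}$: since $\ker B_i = \ker D_i$ is large, the lowering relations together with $\tilde b_{\mathbf{0}}=1$ genuinely do not determine the sequence, so this is where the entire content of the theorem sits, and your proposal does not prove it but instead identifies it (correctly) with Lagrange--Good inversion via the generating function $e^{\mathbf{x}\cdot\bar{\mathbf{B}}(\mathbf{t})}$ and the substitution $P_i = D_i B_i^{-1}$, deferring the computation to \cite{watanabe1984}. As a self-contained proof your attempt is therefore incomplete at exactly that step; as a reading of the theorem it is accurate, and it is strictly more informative than the paper's bare citation, since it isolates which part of the statement is routine commutative-ring bookkeeping and which part is the multivariate inversion theorem in disguise.
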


Within the Jacobian, we have Pincherle 
derivatives $\dfrac{\partial B_i}{\partial D_j}=B_i\theta_j-\theta_jB_i$, where $\theta_j:p\to x_jp$ is the $j$th umbral shift 
operator that does not commute with the delta operators. Thus, there are many ways this transfer formula could be expanded. The 
following conjecture (based on the examples provided below) gives one such way.

\begin{conjecture} \label{MainConjecture}

The basic sequence from the Transfer Theorem can also be calculated as

\[
b_\textbf{n}(\textbf{x})=\sum\limits_{B\vdash[\ell]}(-1)^{\ell-|B|}\left(\theta_{\beta}P_{\beta}\right)_B\dfrac{\textbf{x}^{\textbf{n}-\textbf{1}}}{\textbf{n}!},
\] where $B$ runs through all ordered partitions of $[\ell]$ and $\beta$ runs through the partitions of $B$.

\end{conjecture}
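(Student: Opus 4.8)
The plan is to obtain the conjecture from the Transfer Theorem by applying the terrible expansion of the determinant, Theorem~\ref{thm:terrible}, to the Jacobian. The Jacobian $\mathcal{J}(B_1,\ldots,B_\ell)$ is the determinant of the $\ell\times\ell$ array whose $(i,j)$ entry is the Pincherle derivative $a_{ij}=\dfrac{\partial B_i}{\partial D_j}=B_i\theta_j-\theta_jB_i$. Substituting these entries into Equation~\eqref{eqn:main} and feeding the result into the Transfer Theorem rewrites the basic sequence as
\[
b_\textbf{n}(\textbf{x})=\textbf{P}^{\textbf{n}+\textbf{1}}\sum_{B\vdash[\ell]}(-1)^{\ell-|B|}\left(\prod_{\beta_k\in B}\prod_{j\in\beta_k}\sum_{i\in\beta_k'}\frac{\partial B_i}{\partial D_j}\right)\frac{\textbf{x}^\textbf{n}}{\textbf{n}!}.
\]
The outer index set and the sign $(-1)^{\ell-|B|}$ already match the conjecture, so it suffices to prove, for each fixed ordered partition $B=(\beta_1,\ldots,\beta_r)$, the operator identity
\[
\textbf{P}^{\textbf{n}+\textbf{1}}\left(\prod_{\beta_k\in B}\prod_{j\in\beta_k}\sum_{i\in\beta_k'}\frac{\partial B_i}{\partial D_j}\right)\frac{\textbf{x}^\textbf{n}}{\textbf{n}!}=\left(\theta_\beta P_\beta\right)_B\frac{\textbf{x}^{\textbf{n}-\textbf{1}}}{\textbf{n}!},
\]
where, in the notation $X_A=\prod_{i\in A}X_i$, one reads $\theta_\beta=\prod_{i\in\beta}\theta_i$ and $P_\beta=\prod_{i\in\beta}P_i$, and $(\,\cdot\,)_B$ denotes the product over the parts $\beta$ of $B$.

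The central computation is the inner sum $\sum_{i\in\beta_k'}\partial B_i/\partial D_j$ taken over the initial segment $\beta_k'=\bigcup_{m\le k}\beta_m$. I would first expand each Pincherle derivative using $B_i=D_iP_i^{-1}$, so that $\dfrac{\partial B_i}{\partial D_j}=\delta_{ij}P_i^{-1}+D_i\dfrac{\partial P_i^{-1}}{\partial D_j}$, and then establish a telescoping identity: because $\beta_k'$ is an initial segment of the ordered partition, the contributions coming from the earlier parts should cancel in pairs, leaving only a factor built from the operators indexed by the single new part $\beta_k$. I expect this surviving factor, once the leading $D_i$'s have been commuted past the $\theta_j$'s and the global prefactor $\textbf{P}^{\textbf{n}+\textbf{1}}$ has converted the $P_i^{-1}$'s into $P_i$'s, to be exactly $\theta_\beta P_\beta$. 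The degree shift from $\textbf{x}^\textbf{n}$ to $\textbf{x}^{\textbf{n}-\textbf{1}}$ is then accounted for by the single $D_i$ carried by each $B_i$, while the total $\prod_{i\in[\ell]}\theta_i$ supplied by the parts raises the degree back by $\textbf{1}$ in every coordinate.

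The hard part will be the non-commutativity. Since $[D_i,\theta_j]=\delta_{ij}$ and each $P_i$ is an invertible power series in the $D$'s, the entries $\partial B_i/\partial D_j$ need not commute, so the determinant in the Transfer Theorem is really an \emph{ordered} expansion; the main obstacle is to show that the ordering dictated by the nested products over $\beta_k$ and $j\in\beta_k$ in Theorem~\ref{thm:terrible} is compatible with the ordering forced by the Pincherle structure, and that no residual commutator corrections survive the telescoping. I would pin down the correct ordering and normalization conventions by checking the cases $\ell=2$ and $\ell=3$ against the examples given for the Transfer Theorem, and then prove the general telescoping identity by induction on the number of parts $r$ of $B$, peeling off the last part $\beta_r$ and using that $\beta_r'=[\ell]$ so that its inner sum ranges over the whole index set. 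Confirming that the combinatorial cancellations responsible for the terrible expansion have a faithful operator-theoretic counterpart is, I expect, the crux of the entire argument.
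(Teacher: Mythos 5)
You should know at the outset that the paper contains no proof of this statement: it is stated, and left, as an open conjecture (Question \textbf{Q1} in the conclusions explicitly asks whether it can be proven from Theorem \ref{thm:terrible} by a linearity argument). The paper's only support for it is the specialization $B_i=\mathbf{D}^{\mathbf{a}_i}$, in which the Jacobian entries become monomials in the $D_i$'s and therefore commute, together with the displayed examples for $\ell=2,3$. So your attempt must stand on its own, and as written it has a fatal gap at its very first step: you substitute the non-commuting operators $\partial B_i/\partial D_j$ into Equation \eqref{eqn:main}, but Theorem \ref{thm:terrible} is an identity in a commutative polynomial ring, and its proof (flattening, the Euler-characteristic argument giving $c_f=0$) collects, across different ordered partitions, products that agree only after reordering their factors. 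Already in the $2\times 2$ expansion the coefficient of $f_3$ arises as the difference $a_{22}a_{21}-a_{21}a_{22}$, which vanishes only when the entries commute; with Pincherle-derivative entries, where $[D_i,\theta_j]=\delta_{ij}$, such differences are genuine commutators, and in addition the determinant $\mathcal{J}(B_1,\ldots,B_\ell)$ must itself be assigned an ordering convention before the substitution is even meaningful. You correctly flag non-commutativity as ``the hard part,'' but your plan treats it as bookkeeping to be repaired after the substitution, when in fact the substitution is where the argument breaks: the logical direction in the paper is the reverse of yours --- Theorem \ref{thm:terrible} is the commutative shadow (one power-series term) of Conjecture \ref{MainConjecture}, not a tool from which the conjecture follows.

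The reduction to your per-partition identity, and the ``telescoping identity'' behind it, is precisely where the entire content of the conjecture is hiding, and you leave both unproved (``I expect \dots,'' ``the crux of the entire argument''). Two concrete warnings if you pursue this route: first, $\mathbf{P}^{\mathbf{n}+\mathbf{1}}$ does not commute with the umbral shifts $\theta_j$, so your step in which ``the global prefactor $\mathbf{P}^{\mathbf{n}+\mathbf{1}}$ has converted the $P_i^{-1}$'s into $P_i$'s'' itself generates Pincherle corrections that must be tracked; second, your reading $P_\beta=\prod_{i\in\beta}P_i$ conflicts with the paper's examples, where the factor attached to each part carries the exponents from $\mathbf{n}$ and is interleaved with the $\theta$'s part by part (e.g., $uP_1^m vP_2^n$ in $b_{m,n}$, and $bS\,acRT$ for $B=(\{2\},\{1,3\})$), so the prefactor must be distributed into the ordered product --- yet another non-commutative rearrangement your outline does not justify. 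Checking $\ell=2,3$ against the displayed examples is a sensible sanity test, but nothing in the proposal supplies the mechanism by which the commutator corrections cancel; until such a mechanism is exhibited, the proposal amounts to a restatement of the conjecture rather than a proof of it.
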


Because of our abuse of some notation, we give some examples.

\[
b_{m,n}(u,v)=\left(uP_1^mvP_2^n+vP_2^nuP_1^m-uvP_1^mP_2^n\right)\dfrac{u^{m-1}v^{n-1}}{m!n!}
\]

\footnotesize

\begin{eqnarray}
\notag b_{m,n,p}(a,b,c)&=&\left(aRbScT + aRcTbS + bSaRcT + bScTaR + cTaRbS + cTbSaR\right. \\\notag
 &&- aRbcST - bSacRT - cTabRS - abRScT - acRTbS - bcSTaR  \\\notag
 &&\left.+ abcRST\right)\dfrac{a^{m-1}b^{n-1}c^{p-1}}{m!n!p!} \\\notag
\end{eqnarray}

\normalsize

\[
B=(\{2\}, \{1,3\}) \quad \Rightarrow \quad \left(\theta_{\beta}P_{\beta}\right)_B=bSacRT
\]
The terrible expansion of the determinant comes from setting each $B_i=\textbf{D}^{\textbf{a}_i}=D_1^{a_{i1}}D_2^{a_{i2}}\cdots 
D_n^{a_{in}}$, or in other words, it is one term of the power series in Equation \eqref{eqn:T}.

\section{Conclusions and Open Questions}

We end with few open questions stemming from our work.
\begin{enumerate}
\item[\textbf{Q1.}] Now that Theorem \ref{thm:terrible} shows that Conjecture \ref{MainConjecture} is true for one term of an 
operator's power series, can Conjecture \ref{MainConjecture} be proven by a linearity argument?
\item[\textbf{Q2.}] Can the proof of Ryser's formula given by Horn and Johnson \cite{HJ} be modified to give another proof of 
Theorem \ref{thm:terrible}?
\item[\textbf{Q3.}] Can our proof of Theorem \ref{thm:terrible} be modified to prove Ryser's formula by using the 
topological/combinatorial properties of the cube instead of the permutahedon?
\end{enumerate}

\section{Acknowledgements}
The authors would like to thank Drs. Mohamed Omar, Pamela Harris, and Brian Johnson for helpful conversations during the writing 
of this paper.

\end{document}